\newcommand{\C}{\mathbb{C}}
\renewcommand{\E}{\mathbb{E}}
\DeclareMathOperator{\Range}{Range}
\newcommand{\TheTitle}{Projected Nonlinear Least Squares for Exponential Fitting}
\newcommand{\TheAuthors}{Jeffrey M. Hokanson}
\headers{\TheTitle}{\TheAuthors}
\DeclareMathOperator{\expm}{{\tt expm1}}
\DeclareMathOperator{\expdiff}{{\tt expdiff}}
\newcommand{\eff}{\eta}
\title{Projected Nonlinear Least Squares \\ for Exponential Fitting\thanks{Submitted to the editors 14 July 2016.
\funding{This work was supported by NSF VIGRE grants DMS-0240058 and DMS-0739420
	at Rice University and the Department of Defense, Defense Advanced Research
	Project Agency’s program Enabling Quantification of Uncertainty in Physical Systems.
}}}
\author{
  Jeffrey M. Hokanson\thanks{
	Department of Applied Mathematics and Statistics, Colorado School of Mines,
	1500 Illinois St, Golden CO, 80401,
	(\email{hokanson@mines.edu}, \url{http://inside.mines.edu/\string~hokanson/}).}
}
\begin{document}
\maketitle
\begin{abstract}
	The modern ability to collect vast quantities of data presents a challenge for parameter estimation problems.
	Posed as a nonlinear least squares problem fitting a model to the data,
	the cost of each iteration grows linearly with the amount of data;
	with large data, it can easily become too expensive to perform many iterations.
	Here we develop an approach that projects the data onto a low-dimensional subspace 
	that preserves the quality of the resulting parameter estimates.
	We provide results from both an optimization and a statistical perspective
	that shows that accurate parameter estimates are obtained when the subspace angles
	between this projection and the Jacobian of the model at the current iterate remain small.
	However, for this approach to reduce computational complexity, 
	both the projected model and projected Jacobian must be computed inexpensively.
	This places a constraint on the pairs of models and subspaces for which this approach provides a computational speedup.
	Here we consider the exponential fitting problem projected onto the range of a Vandermonde matrix,
	for which the projected model and projected Jacobian can be computed in closed form using a generalized geometric sum formula.
	We further provide an inexpensive heuristic that picks this Vandermonde matrix
	so that the subspace angles with the Jacobian remain small
	and use this heuristic to update the subspace during optimization.
	Although the asymptotic cost still depends on the data dimension,
	the overall cost of solving this sequence of projected nonlinear least squares problems
	is less expensive than the original.
	Applied to the exponential fitting problem,
	this yields an algorithm that is not only faster in the limit of large data
	than the conventional nonlinear least squares approach,
	but is also faster than subspace based approaches such as HSVD.
\end{abstract}

\begin{keywords}
	exponential fitting, 
	harmonic estimation,
	modal analysis,
	spectral analysis,
	parameter estimation,
	nonlinear least squares, 
    dimension reduction,
	experimental design
\end{keywords}

\begin{AMS}
11L03, 
62K99, 
65K10, 
90C55 
\end{AMS}

\section{Introduction}
With the increasing prowess of data acquisition hardware and storage,
collecting vast amounts of data has become trivial.
This poses a challenge for parameter estimation problems
where the sheer scale of data makes these problems expensive.
Here we consider a nonlinear least squares parameter estimation problem~\cite{HPS13} that seeks
to fit a model $\ve f$ with $q$ parameters $\ve \theta \in \C^q$
to (noisy) measurements $\tve y$ yielding a (noisy) parameter estimate $\tve\theta$
that minimizes the $2$-norm mismatch
\begin{equation}\label{eq:nls}
	\tve \theta := \argmin_{\ve\theta\in \C^q} \| \ve f(\ve\theta) - \tve y\|_2^2, \quad \text{where} \quad 
	 \ve f:\C^q\to \C^n, \quad \tve y\in \C^n, \quad q \ll n.
\end{equation}
With vast quantities of data, 
the asymptotic cost of solving this problem is dominated by the $n$-dependent steps in the optimization.
For example, using either Gauss-Newton or Levenberg-Marquardt,
each optimization step solves a least squares problem involving the Jacobian of $\ve f$,
$\ma J:\C^q \to \C^{n\times q}$, at a cost of $\order(nq^2)$ operations~\cite[Ch.~9]{Bjo96}.
To reduce this cost, we propose replacing the full least squares problem~\cref{eq:nls}
with a sequence of low-dimensional surrogate problems by projecting measurements onto $\tve y$
onto a sequence of subspaces $\set W_\ell\subset \C^n$ with $m_\ell:= \dim \set W_\ell \ll n$
\begin{equation}\label{eq:pnls}
	\tve\theta_{\set W_\ell} := \argmin_{\ve\theta \in \C^q} \| \ma P_{\set W_\ell} [ \ve f(\ve\theta) - \tve y\, ]\|_2^2
	= \argmin_{\ve\theta \in \C^q} \| \ma W_\ell^*\ve f(\ve\theta) - \ma W_\ell^*\tve y\|_2^2,
	\quad \ma P_{\set W_\ell} = \ma W_\ell^{\phantom{*}\!} \ma W_\ell^*,\!\!
\end{equation}
where $\ma P_{\set W_\ell}$ is an orthogonal projector onto $\set W_{\ell}$
and $\ma W_\ell \in \C^{n\times m_\ell}$ is an orthonormal basis for $\set W_\ell$.
Although the total cost is still asymptotically $n$-dependent due to the multiplication $\ma W_\ell^*\tve y$,
each optimization step is cheaper since the projected Jacobian $\ma W^*_\ell\ma J(\ve\theta)$ is smaller.
However, for this computational speedup to be fully realized, 
the products $\ma W_\ell^*\ve f(\ve\theta)$ and $\ma W_\ell^*\ma J(\ve\theta)$
need to be formed without the expensive, $n$-dependent multiplication.
Additionally, we must ensure that the final projected parameter estimate $\tve\theta_{\set W_\ell}$
remains a good estimate of the full parameter estimate, $\tve\theta$.
Here we do so by requiring that the subspace angles between $\set W_\ell$ 
and the Jacobian at the current iterate remain small.
This requirement is justified by perspectives from both optimization and statistics.
From an optimization perspective described in \cref{sec:optimization},
the accuracy of each optimization step depends on these subspace angles
and the projected parameter estimate $\tve\theta_{\set W_\ell}$ is 
equal to the full parameter estimate $\tve\theta$ when
these subspace angles go to zero.
From a statistical perspective described in \cref{sec:statistical},
when measurements $\tve y$ are contaminated by additive Gaussian noise,
the covariance of projected parameter estimate $\tve\theta_{\set W_\ell}$ 
is larger than the covariance of full parameter estimate $\tve\theta$
by an amount that scales with these subspace angles
as measured by \emph{efficiency}.
Hence the subspace angles between $\set W_\ell$ and the Jacobian at the current iterate
determine the quality of our projected parameter estimate $\tve\theta_{\set W_\ell}$.
The challenge in applying this projected nonlinear least squares approach 
to a specific problem is satisfying both criteria simultaneously:
finding a sequence of subspaces $\lbrace \set W_\ell \rbrace_\ell$ with orthogonal bases $\lbrace \ma W_\ell \rbrace_\ell$ where 
$\ma W^*_\ell\ve f(\ve\theta)$ and $\ma W^*_\ell\ma J(\ve\theta)$ can be formed inexpensively independently of $n$
and where the subspace angles between $\set W_\ell$ and the range of $\ma J(\ve\theta_k)$ for each iterate $\ve\theta_k$ remain small.

Here we consider the \emph{exponential fitting problem}~\cite{PS10},
also known as \emph{modal analysis}~\cite{Ewi84}, \emph{harmonic estimation}~\cite{JS11},
and \emph{spectral analysis}~\cite{SM97},
that seeks to approximate data $\tve y$ as a sum of $p$ complex exponentials
with frequencies $\ve\omega$ and amplitudes $\ve a$ where 
\begin{equation}
	[\ve f([\ve\omega, \ve a])]_j = \sum_{k=1}^{p} a_k e^{j \omega_k}, 
		\quad \ve \omega, \ve a\in\C^p; \quad \ve\theta = [\ve\omega, \ve a],  \quad q = 2p.
\end{equation}
There is an extensive body of literature on this problem,
with a wide array of methods for recovering the frequencies $\ve\omega$:
from classical approaches such as Prony's method~\cite{Pro95}
its extensions for overdetermined problems~\cite[\S9.4]{Hil56},
to subspace methods~\cite{HK66} such as HSVD~\cite{BBO87}, HTLS~\cite{HCDH94}, 
and the matrix-pencil method~\cite{HS90},
to parameter estimation approaches using optimization (such as ours)~\cite{HPS13,VBH97},
to more recent approaches based on ideas from sparse recovery~\cite{TBSR13},
and many others described in reviews~\cite{IV99,KM78,PS10,VSHH01}.
We choose this problem due to the exploitable structure of the model function $\ve f([\ve\omega, \ve a])$
that allows us to obtain inexpensive inner products with subspaces that approximately contain the range of the Jacobian.
Specifically, as the model function is the product of a Vandermonde matrix $\ma V(\ve\omega)$
and the amplitudes~$\ve a$,
\begin{equation}
	\ve f([\ve\omega, \ve a]) = \ma V(\ve\omega) \ve a, \qquad [\ma V(\ve\omega)]_{j,k} = e^{j \omega_k},
\end{equation}
by projecting measurements onto the subspace $\set W(\ve\mu)$,
\begin{equation}
	\set W(\ve\mu) := \Range \ma V(\ve\mu) = \Range \ma W(\ve\mu),
	\qquad \ma W(\ve\mu)^*\ma W(\ve\mu) = \ma I, \quad
	\ve\mu \in \C^m,
\end{equation}
we can inexpensively obtain the inner products $\ma W(\ve\mu)^*\ve f([\ve\omega,\ve a])$
and $\ma W(\ve \mu)^*\ma J([\ve\omega, \ve a])$ as described in \cref{sec:closed}
using the geometric sum formula and its generalization given in \cref{sec:geometric}.
Further, using a heuristic described in~\cref{sec:subspace},
we can ensure the subspace angles between $\set W(\ve\mu)$ and $\Range \ma J([\ve\omega, \ve a])$
remain small by a careful selection of $\ve\mu$.
The net result is a faster solution to the exponential fitting problem
in the limit of large data as illustrated by a magnetic resonance spectroscopy test case in \cref{sec:example}.
Further, due to careful selection of the subspaces,
the projected parameter estimate $\tve\theta_{\set W}$ remains close to the full parameter estimate $\tve\theta$
as seen in \cref{fig:dispersion} for a toy problem and in \cref{fig:nmr_pert} for the magnetic resonance spectroscopy test case.

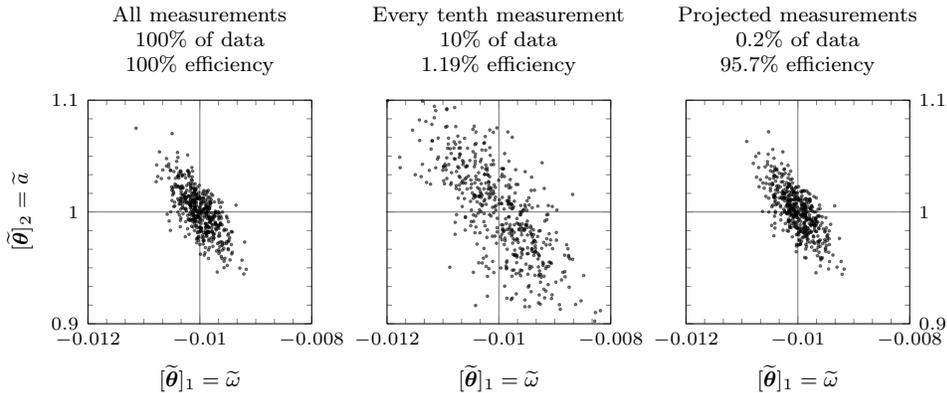
\begin{figure}
	\centering
		\begin{tikzpicture}
			\begin{groupplot}[group style={group size=3 by 1},
				xmin = -0.012, xmax = -0.008,
				xtick = {-0.012, -0.01, -0.008},
				ymin = 0.9, ymax = 1.1,
				ytick = {0.9, 1, 1.1},
				width = 0.35\textwidth, height = 0.35\textwidth,
				xlabel = {$[\tve\theta]_1 = \widetilde\omega$},
				ylabel = {$[\tve\theta]_2 = \widetilde a$},
				xticklabel style={/pgf/number format/fixed, /pgf/number format/precision=3},
				minor tick num=5,
				title style={align=center},
				scaled x ticks = false
				]
				\nextgroupplot[title={All measurements \\ $100\%$ of data\\ $100\%$  efficiency} ]
				\addplot[black, only marks, mark size = 0.5pt, opacity=0.5, mark options = {line width=0pt}] 
					table {fig_dispersion_full.dat};
				\addplot[gray] coordinates {(-0.01, 0) (-0.01, 2)};
				\addplot[gray] coordinates {(-0.1, 1) (0, 1)};
				\nextgroupplot[title={Every tenth measurement  \\ $10\%$ of data\\ $1.19\%$ efficiency}
					, ylabel = {}, yticklabels = {}]
				\addplot[black, only marks, mark size = 0.5pt, opacity=0.5, mark options = {line width=0pt}] 
					table {fig_dispersion_subsample.dat};
				\addplot[gray] coordinates {(-0.01, 0) (-0.01, 2)};
				\addplot[gray] coordinates {(-0.1, 1) (0, 1)};
				\nextgroupplot[title={Projected measurements \\ $0.2\%$ of data\\ $95.7\%$ efficiency},
							 yticklabel pos=right, ylabel = {}]
				\addplot[black, only marks, mark size = 0.5pt, opacity=0.5, mark options = {line width=0pt}] 
					table {fig_dispersion_reduced_opt.dat};
				\addplot[gray] coordinates {(-0.01, 0) (-0.01, 2)};
				\addplot[gray] coordinates {(-0.1, 1) (0, 1)};
			\end{groupplot}
		\end{tikzpicture}
	\caption{
		Parameter estimates from a toy exponential fitting problem with true parameters
		$\widehat{\omega} = -0.01$ and $\widehat{a} = 1$ using $n=1000$ measurements contaminated 
		with zero-mean Gaussian noise $\ve g$ with $\Cov\ve g = 0.01\ma I$,
		$\tve y = \ve f([-0.01, 1]) + \ve g$.
		The parameter estimates on the left are computed by solving \cref{eq:nls}
		with all measurements $\tve y$.
		In the center, the parameter estimates are computed by solving \cref{eq:pnls}
		using a subspace that selects every 10th measurement.
		On the right, the parameter estimates are also computed by solving \cref{eq:pnls},
		but instead using the subspace $\set W(\ve\mu)$ where $\ve\mu = [-0.008\pm 0.0014i]$.
		In each case, the resulting nonlinear least squares problem was solved using Matlab's {\tt lsqnonlin}.
		Efficiency, defined in \cref{sec:statistical}, quantifies how close the 
		covariance of the projected problem resembles the full problem (left).
		As this example shows, the projected parameter estimate with well-chosen subspace yields 
		almost identical parameter estimates to the full problem. 
		}
	\label{fig:dispersion}
\end{figure}

Projection is a recurring theme in applied mathematics, appearing in a variety of contexts
from Galerkin projections for solving partial differential equations
to the randomized projections that form the foundation of randomized numerical linear algebra.
Our projection approach for solving a nonlinear least squares problem
fits into this theme and it is not without precedent.
Incremental methods, such as \emph{incremental gradient}~\cite{Ber97} and the \emph{extended Kalman filter}~\cite{Ber96},
when applied to a nonlinear least squares problem can be interpreted as projecting onto a row (or set of rows) at each iteration,
choosing the basis $\ma W_\ell = [\ma I]_{\cdot,\set I_\ell}$ where $\set I_\ell$ the set of rows at the $\ell$th step.
With this perspective, we note that both our method and incremental methods 
require the projected model $\ma W_\ell^*\ve f(\ve\theta)$ and projected Jacobian $\ma W_\ell^*\ma J(\ve\theta)$ to be formed inexpensively.
Satisfying this requirement is straightforward for incremental methods when $\ve f(\ve\theta)$ is defined entry-wise,
whereas in our case we must be careful choose the orthonormal basis $\ma W_\ell$ such that these products can be, for example, evaluated in closed-form.
However these methods differ is in how the basis $\ma W_\ell$ is chosen.
For incremental methods, the set of rows is typically chosen either deterministically by cycling through rows or by randomly selecting rows~\cite{FS12};
whereas in our case, we carefully choose the basis $\ma W_\ell$ such that our steps are accurate,
and, when the data is contaminated by noise, our parameter estimates are precise.

\section{An optimization perspective\label{sec:optimization}}
In this section we provide three different results that inform 
the choice of subspace $\set W_\ell$ from the perspective of optimization.
Each of these results points to the key role played by the canonical subspace angles between
the subspace $\set W_\ell$ and the range of the Jacobian at the current iterate.
We define these canonical subspace angles following Bj\"orck and Golub~\cite[Thm.~1]{BG73}:
if $\set A$ and $\set B$ are two subspaces of $\C^n$ and if $\ma A\in \C^{n\times m_a}$ and $\ma B\in \C^{n \times m_b}$
are orthonormal bases for $\set A$ and $\set B$, 
then the canonical subspace angles $\phi_k(\set A, \set B)$ between $\set A$ and $\set B$ are
\begin{equation}\label{eq:subspace_angles}
	\!\cos \phi_k(\set A, \set B) \!:= \! \sigma_k(\ma A^*\ma B), \ \ 
	0\! \le\! \phi_1(\set A, \set B) \!\le\! \phi_2(\set A, \set B) \!\le\!\! \cdots \!\le\! \phi_{\min\lbrace m_a, m_b\rbrace} (\set A, \set B)\! \le\! \pi/2\!\!\!
\end{equation}
where $\sigma_k(\ma X)$ is the $k$th singular value of $\ma X$ in descending order.
Our first result in \cref{sec:optimization:first} uses the first order necessary conditions to observe that 
the projected problem will have the same stationary points as the full problem
when the subspace angles between the $\set W$ and the range of the Jacobian at the stationary point are zero.
Our second result in \cref{sec:optimization:proximity} shows that the difference between the Gauss-Newton steps
of the full and projected problems depends on the subspace angles between $\set W_\ell$
and the range of the Jacobian at the current iterate.
Our third result in \cref{sec:optimization:inexact} interprets the Levenberg-Marquardt method applied to the projected problem
as computing inexact steps of the Levenberg-Marquardt method applied to the full problem.
We show that by making the subspace angles between $\set W_\ell$ and the range of the Jacobian at the current iterate
small, we can satisfy one of the conditions for the convergence of inexact Newton.
All these results suggest that the subspace angles between $\set W_\ell$
and the range of the Jacobian at the current iterate should be small.

\subsection{First order optimality\label{sec:optimization:first}}
The first order necessary conditions for a point $\cve \theta$ to be a local optimum
require that the gradient of the objective function at this point be zero~\cite[Thm.~2.2]{NW06}.
In the context of nonlinear least squares, where the gradient of the full problem~\cref{eq:nls} is
\begin{equation}
	\nabla_\ve\theta \ \|\ve f(\ve\theta) -\tve y\|_2^2 = 
		2\,\ma J(\ve\theta)^*\ve r(\ve\theta), \qquad \ve r(\ve\theta) :=\ve f(\ve\theta) - \tve y, 
		\quad [\ma J(\ve\theta)]_{\cdot,k} := \frac{\partial \ve r(\ve\theta)}{\partial [\ve\theta]_k}
\end{equation}
a point $\cve\theta$ satisfies the first order necessary conditions if~\cite[\S9.1.2]{Bjo96}
\begin{equation}\label{eq:first_full}
	\ma J(\cve \theta)^* \ve r(\cve \theta) = \ve 0.
\end{equation}
Similarly for the projected problem~\cref{eq:pnls},
a point $\cve\theta_\set W$ will satisfy the first order necessary conditions 
for the projected problem if
\begin{equation}
	\ma J(\cve\theta_\set W)^*\ma P_\set W \ve r(\cve\theta_\set W) = \ve 0.
\end{equation}
To assess the quality of the projected problem, we ask:
under what conditions will $\cve\theta_\set W$ 
also satisfy the first order necessary conditions for the full problem~\cref{eq:first_full}?
There are two conditions under which this happens.
The zero-residual case, where $\ve r(\cve\theta_\set W) = \ve 0$,
implies that measurements $\tve y$ exactly fit the model $\ve f$.
This situation makes the problem easy to solve,
as any subspace $\set W$ that yields a well-posed optimization problem can be used.
The other, more general situation occurs when $\set W$ contains the range of the Jacobian,
as then $\ma P_\set W \ma J(\cve\theta_\set W) = \ma J(\cve\theta_\set W)$.
This is equivalent to requiring all the subspace angles between $\set W$
and $\Range \ma J(\cve\theta_\set W)$ to be zero.
The challenge with this condition is that it is black or white: 
either the $\set W$ contains the range of the Jacobian or it does not.
In the next two subsections we suggest other requirements on $\set W$
that allow more shades of grey.

\subsection{Proximity of steps\label{sec:optimization:proximity}}
Another result that provides insight into the choice of subspace $\set W$
comes from considering the Gauss-Newton step~\cite[\S10.3]{NW06} for the full and projected problems
at $\ve\theta$:
\begin{equation}\label{eq:gn_steps}
	\ve s = -\ma J(\ve\theta)^+\ve r(\ve\theta) \quad \text{(full)}, \qquad 
	\ve s_\set W = -[\ma P_\set W \ma J(\ve\theta)]^+\ma P_\set W \ve r(\ve\theta) \quad \text{(projected)}
\end{equation}
where $\ma A^+$ denotes the pseudoinverse of $\ma A$~\cite[\S5.5.4]{GL96}.
We bound the difference between these two steps using \cref{lem:project_ls} from \cref{sec:bounds}.

\begin{theorem}[Gauss-Newton step accuracy]\label{thm:gn_step}
	Let $\ve s$ and $\ve s_\set W$ be the Gauss-Newton steps for the full and projected problems at $\ve \theta$
	as given in \cref{eq:gn_steps}, then their mismatch is bounded by
	\begin{equation}
		\| \ve s - \ve s_\set W\|_2 \le \| \ma J(\ve\theta)^+\|_2 \| \ve r(\ve\theta)\|_2
			\left[\sin \phi_q(\set W, \set J(\ve\theta) ) + \tan^2\phi_q(\set W, \set J(\ve\theta) ) \right]
	\end{equation}	
	where $\set J(\ve\theta) := \Range \ma J(\ve\theta)$.
\end{theorem}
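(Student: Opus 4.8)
The plan is to compare the two Gauss–Newton steps in \cref{eq:gn_steps} by inserting an intermediate quantity and using a perturbation bound for pseudoinverses applied to projected least squares problems — precisely what \cref{lem:project_ls} is set up to deliver. First I would write $\ve s_{\set W} = -[\ma P_{\set W}\ma J(\ve\theta)]^+ \ma P_{\set W}\ve r(\ve\theta)$ and compare it to $\ve s = -\ma J(\ve\theta)^+\ve r(\ve\theta)$. The natural splitting is to introduce $\ve t := -[\ma P_{\set W}\ma J(\ve\theta)]^+ \ve r(\ve\theta)$ (the solution of the projected least squares problem but with the \emph{unprojected} residual on the right-hand side), so that $\ve s - \ve s_{\set W} = (\ve s - \ve t) + (\ve t - \ve s_{\set W})$, and bound the two pieces separately. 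The term $\ve s - \ve t$ measures the effect of replacing $\ma J(\ve\theta)^+$ by $[\ma P_{\set W}\ma J(\ve\theta)]^+$, i.e.\ the perturbation of the least squares solution when the coefficient matrix $\ma J(\ve\theta)$ is tilted into $\set W$; the term $\ve t - \ve s_{\set W} = -[\ma P_{\set W}\ma J(\ve\theta)]^+(\ma I - \ma P_{\set W})\ve r(\ve\theta)$ measures the effect of projecting the residual.

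For the first piece I expect $\|\ve s - \ve t\|_2 \le \|\ma J(\ve\theta)^+\|_2\,\|\ve r(\ve\theta)\|_2\,\tan^2\phi_q(\set W,\set J(\ve\theta))$, which should come out of \cref{lem:project_ls}: the relevant perturbation matrix is $\ma J(\ve\theta) - \ma P_{\set W}\ma J(\ve\theta) = (\ma I - \ma P_{\set W})\ma J(\ve\theta)$, whose norm is $\sin\phi_q(\set W,\set J(\ve\theta))\cdot\|\ma J(\ve\theta)\|_2$ up to the usual constants, and the standard least squares perturbation theory contributes a factor controlled by the condition number together with a $\sec\phi$-type amplification, which collapses to $\tan^2\phi_q$ after the algebra. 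For the second piece, $\|\ve t - \ve s_{\set W}\|_2 \le \|[\ma P_{\set W}\ma J(\ve\theta)]^+\|_2\,\|(\ma I-\ma P_{\set W})\ve r(\ve\theta)\|_2$; here $\|(\ma I - \ma P_{\set W})\ve r(\ve\theta)\|_2 \le \|\ve r(\ve\theta)\|_2$ trivially, and $\|[\ma P_{\set W}\ma J(\ve\theta)]^+\|_2 = 1/\sigma_q(\ma P_{\set W}\ma J(\ve\theta))$ must be related back to $\|\ma J(\ve\theta)^+\|_2 = 1/\sigma_q(\ma J(\ve\theta))$ using $\sigma_q(\ma P_{\set W}\ma J(\ve\theta)) \ge \cos\phi_q(\set W,\set J(\ve\theta))\,\sigma_q(\ma J(\ve\theta))$, which is exactly the bound \cref{eq:subspace_angles} encodes for the smallest singular value of a projected full-rank matrix. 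This gives the $\sin\phi_q$ summand after dividing, and adding the two contributions yields the stated bound.

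The main obstacle I anticipate is bookkeeping the singular-value / subspace-angle identities cleanly — specifically, making sure the correct angle $\phi_q$ (the $q$th, i.e.\ \emph{largest} relevant canonical angle between $\set W$ and the $q$-dimensional range of $\ma J(\ve\theta)$, assuming $\ma J(\ve\theta)$ has full column rank) appears uniformly in both pieces, and that the worst-case singular value $\sigma_q$ rather than $\sigma_1$ drives the $\tan^2$ term. The inequality $\sigma_q(\ma P_{\set W}\ma J) \ge \cos\phi_q\,\sigma_q(\ma J)$ needs the observation that $\ma P_{\set W}\ma J$ has the same column space dimension as $\ma J$ only when no angle equals $\pi/2$, so a standing nondegeneracy assumption ($\phi_q < \pi/2$) is implicit — which is consistent with the $\tan^2\phi_q$ term blowing up exactly there. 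Once \cref{lem:project_ls} is invoked with the perturbation identified as $(\ma I - \ma P_{\set W})\ma J(\ve\theta)$, the rest is careful but routine: substitute the norm of this perturbation, factor out $\|\ma J(\ve\theta)^+\|_2\|\ve r(\ve\theta)\|_2$, and combine the residual-projection error with the solution-perturbation error to land on $\sin\phi_q + \tan^2\phi_q$.
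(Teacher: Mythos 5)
Your decomposition does not do what you intend, because your intermediate point is not actually intermediate. Since $\ma P_\set W$ is an orthogonal projector, $[\ma P_\set W\ma J(\ve\theta)]^+ = (\ma J(\ve\theta)^*\ma P_\set W\ma J(\ve\theta))^{-1}\ma J(\ve\theta)^*\ma P_\set W$ already ends in $\ma P_\set W$, so $[\ma P_\set W\ma J(\ve\theta)]^+\ve r(\ve\theta) = [\ma P_\set W\ma J(\ve\theta)]^+\ma P_\set W\ve r(\ve\theta)$ and hence $\ve t = \ve s_\set W$ identically. Your second piece $\ve t - \ve s_\set W = -[\ma P_\set W\ma J(\ve\theta)]^+(\ma I-\ma P_\set W)\ve r(\ve\theta)$ is therefore exactly zero (the pseudoinverse annihilates everything orthogonal to $\set W$) and cannot supply the $\sin\phi_q$ summand; the entire mismatch sits in your first piece, and the bound you assert for it, $\|\ma J(\ve\theta)^+\|_2\|\ve r(\ve\theta)\|_2\tan^2\phi_q$, is false in general because the error is genuinely first order in the angle. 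For example, with $n=2$, $q=m=1$, $\ma J = (1,0)^T$, $\ve r = (0,1)^T$, and $\set W$ spanned by $(\cos\phi,\sin\phi)^T$, one has $\ve s = 0$ while $\ve s_\set W = -\tan\phi$, so $\|\ve s - \ve s_\set W\|_2 = \tan\phi \gg \tan^2\phi$ for small $\phi$. Standard least squares perturbation theory for the matrix perturbation $(\ma I-\ma P_\set W)\ma J(\ve\theta)$ likewise produces a term that is first order in $\|(\ma I-\ma P_\set W)\ma J(\ve\theta)\|_2 \sim \sin\phi_q$, so the advertised ``collapse to $\tan^2\phi_q$'' cannot happen.

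The paper's proof is simply a direct invocation of \cref{lem:project_ls} with $\ma A = \ma J(\ve\theta)$ and $\ve b = \ve r(\ve\theta)$, followed by the crude bounds $\sin\phi_1(\set W,\set B)\le 1$ and $\cos\phi_1(\set W,\set B)\le 1$. Inside that lemma the splitting is performed on the \emph{full} solution rather than the projected one: writing $\ve x = (\ma A^*\ma A)^{-1}\ma A^*(\ma P_\set W + \ma P_\set W^\perp)\ve b$ gives $\ve x - \ve y = (\ma A^*\ma A)^{-1}\ma A^*\ma P_\set W^\perp\ve b + [(\ma A^*\ma A)^{-1} - (\ma A^*\ma P_\set W\ma A)^{-1}]\ma A^*\ma P_\set W\ve b$. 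There the first term carries the first-order contribution $\sin\phi_q(\set W,\set A)\sin\phi_1(\set W,\set B)$, while the second term---a difference of normal-equation inverses controlled via $\|(\ma U^*\ma W\ma W^*\ma U)^{-1}-\ma I\|_2 \le \sec^2\phi_q(\set W,\set A)-1 = \tan^2\phi_q(\set W,\set A)$---carries the $\tan^2$ contribution. Your attribution of the two summands (matrix tilt $\to\tan^2$, residual projection $\to\sin$) is thus reversed relative to the decomposition that actually works, and the nondegenerate split has to be made on the $\ve s$ side. Your side remarks (the inequality $\sigma_q(\ma P_\set W\ma J)\ge\cos\phi_q\,\sigma_q(\ma J)$ and the implicit assumption $\phi_q<\pi/2$, matching the lemma's full-column-rank hypothesis on $\ma P_\set W\ma A$) are correct but do not rescue the plan.
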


Using this theorem, we can provide a heuristic for estimating the mismatch between the full and projected parameter estimates.
Applying the Gauss-Newton method to the full problem starting at a stationary point of the projected problem $\cve\theta_\set W$,
we note the first step cannot move further than 
\begin{equation}
		\| \ve s \|_2 \le \| \ma J(\cve\theta_\set W)^+\|_2 \| \ve r(\cve\theta_\set W)\|_2
			\left[\sin \phi_q(\set W, \set J(\cve\theta_\set W) ) + \tan^2\phi_q(\set W, \set J(\cve\theta_\set W) ) \right].
\end{equation}
Although multiple iterations of the Gauss-Newton method might be required to reach a stationary point of the full problem,
if $\cve\theta_\set W$ is sufficiently close to a stationary point $\cve\theta$ of the full problem,
then we expect this first step to yield a good estimate; i.e., $\cve\theta_\set W + \ve s \approx \cve\theta$.
This suggests choosing subspaces $\set W$ to minimize the largest subspace angle between
$\set W$ and the range of Jacobian at the stationary point of the projected problem $\set J(\cve\theta_\set W)$
to ensure the full and projected parameter estimates are nearby.

\subsection{Inexact Levenberg-Marquardt\label{sec:optimization:inexact}}
A third and final result that provides insight into the choice of subspace $\set W$
comes from considering steps of the Levenberg-Marquardt method~\cite[\S10.3]{NW06} 
applied to the projected problem~\cref{eq:pnls} as inexact steps of the Levenberg-Marquardt method applied to
the full problem~\cref{eq:nls}.
For the full problem, the Levenberg-Marquardt method generates a sequence of iterates $\lbrace \ve \theta_k\rbrace_{k}$
starting from a given $\ve\theta_0$ using the rule
\begin{equation}\label{eq:lm_full}
	\ve \theta_{k+1} = \ve\theta_k + \ve s_k, \qquad 
	\ve s_k := \argmin_{\ve s\in \C^q} 
				\left\| \begin{bmatrix} \ma J(\ve\theta_k) \\ \lambda_k \ma I \end{bmatrix} \ve s 
					+ \begin{bmatrix} \ve r(\ve \theta_k) \\ \ve 0 \end{bmatrix} \right\|_2^2
\end{equation}
where $\lambda_k$ has been chosen to enforce a trust region; see, e.g.,~\cite[\S3.3.5]{Kel99}.
Iterates of the projected problem $\lbrace \tve\theta_k \rbrace_{k\ge 0}$
follow a similar update rule:
\begin{equation}\label{eq:lm_projected}
	\tve \theta_{k+1} = \tve\theta_k + \tve s_k, \qquad 
	\tve s_k := \argmin_{\ve s\in \C^q} 
				\left\| \begin{bmatrix} \ma W_k^*\ma J(\ve\theta_k) \\ \lambda_k \ma I \end{bmatrix} \ve s 
					+ \begin{bmatrix} \ma W_k^*\ve r(\ve \theta_k) \\ \ve 0 \end{bmatrix} \right\|_2^2,
\end{equation}
where $\ma W_k$ is the orthonormal basis for the subspace applied at the $k$th step.
Here we ask, under what conditions on $\ma W_k$ does 
the sequence $\lbrace \tve\theta_k\rbrace_k$ converge to
the same point as $\lbrace \ve\theta_k \rbrace_k$?
Although we are unable to prove the convergence of the projected iterates 
unless the subspace angles between $\set J(\tve\theta_k)$ and $\set W_k$ go to zero,
we invoke the convergence analysis of inexact Newton~\cite{DES82},
and specific results for inexact Levenberg-Marquardt~\cite{WH85},
to suggest a choice of $\set W_k$.
These convergence results require that the error in the step $\tve s_k$ 
be bounded by a forcing sequence $\lbrace \alpha_k \rbrace_k$: 
\begin{equation}\label{eq:inexact_error}
	\frac{\| (\ma J(\tve \theta_k)^*\ma J(\tve \theta_k) + \lambda_k^2 \ma I)\tve s_k +
		 \ma J(\tve \theta_k)^*\ve r(\tve \theta_k)\|_2}
		{\|\ma J(\tve \theta_k)^*\ve r(\tve \theta_k)\|_2} \le \alpha_k < \alpha < 1.
\end{equation}
Here we show that the quantity on the left can be bounded above in terms of the 
subspace angles and that this quantity can be made arbitrarily small.

To bound~\cref{eq:inexact_error} we use a result from \cref{sec:bounds} 
applied to the augmented subspace, Jacobian, and residual in the least squares problem for $\tve s_k$, \cref{eq:lm_projected},
\begin{align*}
	\hset W_k := \Range \begin{bmatrix} \ma W_k & \ma 0 \\ \ma 0 & \ma I \end{bmatrix}, \quad
	\hma J_k := \begin{bmatrix} \ma J(\tve\theta_k) \\ \lambda_k \ma I \end{bmatrix}, \quad
	\hve r_k := \begin{bmatrix} \ve r(\ve\theta_k) \\ \ve 0 \end{bmatrix}.
\end{align*}
Then applying \cref{lem:pls_normal} with subspace $\hset W_k$ to \cref{eq:lm_projected} yields,
\begin{equation}\label{eq:lm_inexact_step1}
	\frac{\| (\ma J(\tve \theta_k)^*\ma J(\tve \theta_k) + \lambda_k^2 \ma I)\tve s_k 
		+ \ma J(\tve \theta_k)^*\ve r(\tve \theta_k)\|_2}
		{\|\ma J(\tve \theta_k)^*\ve r(\tve \theta_k)\|_2} 
		\le \frac{\sin \phi_q(\hset W_k, \hset J_k)}{\cos^2 \phi_q(\hset W_k, \hset J_k)}
			\frac{\|\hma J_k\|_2 \| \ma P_{\hset J_k}^\perp \hve r_k \|_2}
				{\|\ma J(\tve\theta_k)^*\ve r(\tve\theta_k)\|_2}.
\end{equation}
where $\hset J_k := \Range \hma J_k$, $\hset R_k := \Range \hve r_k$,
and $\ma P_{\hset J_k}^\perp$ denotes the orthogonal projector onto the subspace perpendicular to $\hset J_k$.
To obtain an expression in terms of subspace angles, we note the numerator can be written in terms of sines
\begin{align*}
	\| \ma P_{\hset J_k}^\perp \hve r_k\|_2 = \sin \phi_1(\hset J_k, \hset R_k) \|\hve r_k\|_2
		= \sin \phi_1(\hset J_k, \hset R_k)\| \ve r(\tve\theta_k)\|_2,
\end{align*}
and similarly we can bound the denominator in terms of subspace angles,
\begin{align*}
	\| \ma J(\tve\theta_k)^*\ve r(\tve\theta_k)\|_2 = \| \hma J_k^*\hve r_k\|_2
	= \| \hma J_k^* \ma P_{\hset J_k} \hve r_k\|_2 \ge 
	\sigma_q(\hma J_k) \cos \phi_1(\hset J_k, \hset R_k) \| \ve r(\tve\theta_k)\|_2.
\end{align*}
Combining these two results yields the upper bound
\begin{equation}\label{eq:lm_inexact_bound}
	\frac{\| (\ma J(\tve \theta_k)^*\ma J(\tve \theta_k) \!+\! \lambda_k^2 \ma I)\tve s_k 
		\!+\! \ma J(\tve \theta_k)^*\ve r(\tve \theta_k)\|_2}
		{\|\ma J(\tve \theta_k)^*\ve r(\tve \theta_k)\|_2}  
		\!\le\! \frac{\sin \phi_{q}(\hset W_k, \hset J_k)\tan \phi_1(\hset J_k, \hset R_k)}
			{\cos^2 \phi_q(\hset W_k, \hset J_k)}
			\frac{\sigma_1(\hma J_k)}{\sigma_q(\hma J_k)}.\!\!\!\!
\end{equation}

This result again confirms the centrality of the subspace angles between $\set W$ and the range of the Jacobian,
although in this result, it is the augmented subspace $\hset W_k$ and augmented Jacobian $\hset J_k$.
By controlling the subspace $\set W_k$, we can ensure that bound in~\cref{eq:lm_inexact_bound}
is smaller than one so that step $\tve s_k$ obeys the bound required by inexact Newton~\cref{eq:inexact_error}.
This suggests that the Levenberg-Marquardt method applied to the projected problem makes progress towards solving the full problem.
However, this result cannot be used online as it requires evaluating the full residual to compute $\phi_1(\hset J_k, \hset R_k)$.
Nor can we use this result to guarantee convergence since
Wright and Holt's convergence result for inexact Levenberg-Marquardt~\cite[Thm.~5]{WH85}
requires an additional sufficient decrease condition.
The projected problem is unlikely to satisfy this additional constraint
since projected problem converges different stationary points
unless the subspace angles between $\set W_k$ and $\set J(\tve\theta_k)$ go to zero as $k\to\infty$.
This prompts the statistical approach we use in the next section
to answer the question: how close are the projected parameter estimates to the full parameter estimates?


\section{A statistical perspective\label{sec:statistical}}
One setting in which the nonlinear least squares problem~\cref{eq:nls} can arise
is when measurements $\tve y$ are the sum of $\ve f$ evaluated at some true parameters~$\hve\theta \in \C^q$
 plus Gaussian random noise $\ve g$ with zero mean and covariance~$\epsilon^2\ma I$; 
$\tve y = \ve f(\hve\theta) + \ve g$.
Then the nonlinear least squares problem,
\begin{equation}
	\tve\theta(\ve g) := \argmin_{\ve\theta} \| \ve f(\ve\theta) - (\ve f(\hve\theta) + \ve g) \|_2,
\end{equation}
yields the \emph{maximum likelihood estimate} $\tve\theta$ of $\hve\theta$~\cite[\S2.1]{SW89}.
This estimate has a number of beneficial features.
In the limit of large data or small noise, the estimator $\tve\theta$ is unbiased
and obtains the \emph{Cram\'er-Rao lower bound}, namely, 
$\tve\theta$ has the smallest possible covariance of any unbiased estimator of $\hve\theta$~\cite[\S6.3]{SS10}.
Hence, the corresponding projected parameter estimate
\begin{equation}
	\tve\theta_\set W(\ve g) := \argmin_{\ve\theta} 
		\| \ma P_\set W [\ \ve f(\ve\theta) - (\ve f(\hve\theta) + \ve g)\ ] \|_2
\end{equation}
must have a larger covariance.
By using the inexpensive projected parameter estimate $\tve\theta_\set W$
as an alternative to full parameter estimate $\tve\theta$, 
we are following in a tradition that dates back to Fisher~\cite[\S8]{Fis22}.
Fisher quantified the loss of precision incurred by a particular scalar estimator 
by the \emph{efficiency}: the ratio of the minimum covariance to the estimator's covariance.
For our vector valued estimates, the covariance is a positive definite matrix in $\C^{q\times q}$
and so to obtain a scalar value for the efficiency ratio, 
we follow the lead of experimental design~\cite[\S2.1]{Sil80},\cite[\S1.4]{Mel06}
and consider the determinant of the covariance matrix, which leads to the \emph{D-efficiency}
\begin{equation}\label{eq:true_eff}
	\widehat{\eff}(\set W) := \frac{ \det \Cov \tve\theta}{\det \Cov \tve\theta_\set W} \in [0,1].
\end{equation}
In choosing our subspace $\set W$, our goal will be to make the efficiency as large as possible
so that the covariance of the estimates for $\tve\theta_\set W$ and $\tve\theta$ are similar.
However, as $\tve\theta$ and $\tve\theta_\set W$ are both nonlinear functions of the noise $\ve g$,
we cannot compute a closed form expression for the efficiency.
Instead, following a standard approach for nonlinear experimental design~\cite[\S1.4]{Fed72},
we linearize the parameter estimates $\tve\theta$ and $\tve\theta_\set W$ about $\hve\theta$
yielding the \emph{linearized D-efficiency} as derived in \cref{sec:statistical:linearized}.
The main result of this section will be to connect linearized efficiency to the subspace
angles between $\set W$ and the range of the Jacobian at $\hve\theta$, $\set J(\hve\theta)$:
\begin{equation}\label{eq:linear_eff}
	\eff(\set W, \set J(\hve\theta))
		:= \frac{\det [\ma J(\hve\theta)^*\ma J(\hve\theta)]^{-1}}{
			\det [\ma J(\hve\theta)^* \ma P_\set W \ma J(\hve\theta)]^{-1}}
			= \prod_{k=1}^q \cos^2 \phi_q(\set W, \set J(\hve\theta))
		\approx
			\frac{ \det \Cov \tve\theta}{\det \Cov \tve\theta_\set W}.
\end{equation}
We will refer to $\eff$ as simply the efficiency,
and following Fisher we will say a subspace $\set W$ is $95\%$ efficient for $\hve\theta$
if $\eff(\set W, \set J(\hve\theta)) = 0.95$.
Later in \cref{sec:subspace} we will design subspaces for the exponential fitting problem
with the goal of obtaining a target efficiency. 
Further, note that maximizing efficiency corresponds to minimizing the covariance of $\tve\theta_\set W$
and hence selecting the subspace $\set W$ is similar to experimental design~\cite{Fed72,Sil80,Mel06},
albeit where the design is happening after the data has been collected.

\subsection{Linearized efficiency\label{sec:statistical:linearized}}
Here we briefly derive the linearized estimate of $\tve\theta_\set W$ and the corresponding
linearized covariance; cf.~\cite[\S12.2.6]{SW89}.
In the limit of small noise $\ve g$, we expand $\tve\theta_\set W$ about the true parameters $\hve\theta$
\begin{equation}
	\tve\theta_\set W(\ve g) = \hve\theta + [\ma W^*\ma J(\hve\theta)]^+ \ma W^*\ve g + \order(\|\ve g\|_2^2).
\end{equation}
Applying this first order estimate in the covariance, we have 
\begin{equation}
	\begin{split}
	\Cov\tve\theta_\set W &= 
		\E_{\ve g} [(\hve\theta - \tve\theta_\set W(\ve g))(\hve\theta - \tve\theta_\set W(\ve g))^*] \\
		&\approx \E_\ve g 
			\left[ [\ma W^*\ma J(\hve\theta)]^{+*} \ma W^*\ve g \ve g^*\ma W[\ma W^*\ma J(\hve\theta)]^+\right] 
		= \epsilon^2 [\ma J(\hve\theta)^*\ma W\ma W^*\ma J(\hve\theta)]^{-1}
	\end{split}
\end{equation}
when $\Cov \ve g = \epsilon^2 \ma I$.
To obtain the D-linearized efficiency~\cref{eq:linear_eff},
we replace $\Cov \tve\theta$ and $\Cov\tve\theta_\set W$  with this the estimate above.




\subsection{Relating efficiency to subspace angles}
As with the optimization perspective, a good subspace from a statistical perspective
will have small subspace angles between $\set W$ and the Jacobian $\set J(\hve\theta)$.
The following theorem establishes this connection.

\begin{theorem}\label{thm:eff_to_subspace}
	If $\set W$ is an $m$-dimensional subspace of $\C^n$
	with orthonormal basis $\ma W$
	and 
	$\ma J \in \C^{n\times q}$ where $m \ge q$ 
	with $\set J := \Range \ma J$, then
	\begin{equation}
		\eff(\set W, \set J) := 
		\frac{ \det ([\ma J^*\ma J]^{-1})}{\det ([\ma J^*\ma W\ma W^*\ma J]^{-1})} =
		\prod_{k=1}^{q} \cos^2 \phi_k\left(\set W, \set J\right)
	\end{equation}
	where $\phi_k(\set A, \set B)$ is the $k$th principle angle between $\set A, \set B \subset \C^n$
	as defined in~\cref{eq:subspace_angles}. 
\end{theorem}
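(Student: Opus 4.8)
The plan is to reduce the determinant ratio defining $\eff(\set W, \set J)$ to the Gram determinant of $\ma W^*\ma Q_J$ for an orthonormal basis $\ma Q_J$ of $\set J$, and then identify its eigenvalues with the squared cosines of the principal angles via the definition in~\cref{eq:subspace_angles}. First I would rewrite the quantity in its ``forward'' form: taking determinants of the inverses,
\[
	\eff(\set W, \set J) = \frac{\det(\ma J^*\ma W\ma W^*\ma J)}{\det(\ma J^*\ma J)},
\]
which is well defined for any $\ma J$ (and agrees with the stated ratio of inverse-determinants whenever the latter makes sense). Note $\ma J$ must have full column rank for $[\ma J^*\ma J]^{-1}$ to exist, so I assume $\rank \ma J = q$; the right-hand side likewise vanishes exactly when some $\phi_k = \pi/2$, i.e.\ when $\ma J^*\ma W\ma W^*\ma J$ is singular, so the two sides degenerate together and no separate case analysis is needed if one works with the forward form.

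Next I would peel off the triangular factor of $\ma J$. Write a thin QR (equivalently reduced SVD) factorization $\ma J = \ma Q_J\ma R_J$ with $\ma Q_J\in\C^{n\times q}$, $\ma Q_J^*\ma Q_J = \ma I$, $\Range\ma Q_J = \set J$, and $\ma R_J\in\C^{q\times q}$ invertible. Then $\ma J^*\ma J = \ma R_J^*\ma R_J$ and $\ma J^*\ma W\ma W^*\ma J = \ma R_J^*(\ma Q_J^*\ma W\ma W^*\ma Q_J)\ma R_J$, so by multiplicativity of the determinant the factor $|\det\ma R_J|^2$ cancels, leaving
\[
	\eff(\set W, \set J) = \det\!\big(\ma Q_J^*\ma W\ma W^*\ma Q_J\big) = \det\!\big((\ma W^*\ma Q_J)^*(\ma W^*\ma Q_J)\big).
\]
Here $\ma W^*\ma Q_J\in\C^{m\times q}$ with $m\ge q$, so this $q\times q$ Gram matrix is positive semidefinite with eigenvalues $\sigma_1(\ma W^*\ma Q_J)^2,\dots,\sigma_q(\ma W^*\ma Q_J)^2$, and its determinant is the product $\prod_{k=1}^q \sigma_k(\ma W^*\ma Q_J)^2$.

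Finally I would invoke~\cref{eq:subspace_angles} with the orthonormal bases $\ma W$ of $\set W$ and $\ma Q_J$ of $\set J$: by definition $\cos\phi_k(\set W,\set J) = \sigma_k(\ma W^*\ma Q_J)$ for $k = 1,\dots,\min\{m,q\} = q$, which gives $\eff(\set W,\set J) = \prod_{k=1}^q \cos^2\phi_k(\set W,\set J)$ as claimed. The only point requiring a word of justification — and the closest thing to an obstacle, though a minor one — is that~\cref{eq:subspace_angles} is stated for an arbitrary orthonormal basis, so one should remark that the principal angles do not depend on which orthonormal bases are chosen: replacing $\ma W$ or $\ma Q_J$ by $\ma W\ma U$ or $\ma Q_J\ma U'$ with $\ma U,\ma U'$ unitary only multiplies $\ma W^*\ma Q_J$ by unitaries on the left and right, leaving its singular values unchanged. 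With that observation the identification is immediate and the proof is complete.
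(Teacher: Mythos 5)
Your proposal is correct and follows essentially the same route as the paper's proof: a thin QR factorization of $\ma J$, cancellation of the triangular factors via multiplicativity of the determinant, and identification of the singular values of $\ma W^*\ma Q_J$ with the cosines of the principal angles. The extra remarks on well-definedness and basis independence are sound but do not change the argument.
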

\begin{proof}
	Let $\ma J = \ma Q \ma T$ be the short-form QR factorization of $\ma J$ where $\ma Q^*\ma Q = \ma I$.
	Then using the multiplicative property of the determinant~\cite[\S0.3.5]{HJ85},
	\begin{align*}
		\eff(\set W, \set J ) &= 
			\frac{ \det(\ma J^*\ma W\ma W^*\ma J)}{\det (\ma J^*\ma J)}
			= \frac{ \det (\ma Q^*\ma W \ma W^*\ma Q) \det (\ma T)\det (\ma T^*)}{
			\det (\ma Q^*\ma Q) \det (\ma T) \det (\ma T^*)}\\
			&= \det (\ma Q^*\ma W\ma W^*\ma Q) 
			= \prod_{k=1}^{q} \sigma_k(\ma W^*\ma Q)^2
			= \prod_{k=1}^q \cos^2 \phi_k(\set W, \set J).
	\end{align*}
\end{proof}

\subsection{Properties of efficiency for projected problems}
We conclude with three results about the linearized D-efficiency
that aid in our construction of subspaces for exponential fitting in \cref{sec:subspace}.
There, our approach will be to precompute a finite number of subspaces for a single exponential
and combine these to produce subspaces for multiple exponentials.

The first result proves an intuitive fact:
by enlarging the subspace $\set W$,
the efficiency will not decrease.
The following theorem establishes this result,
making use of the partial ordering of positive definite matrices;
namely, $\ma A\succeq \ma B$ if $\ma A - \ma B$ is positive definite~\cite[\S7.7]{HJ85}.

\begin{theorem}\label{thm:worse}
If $\set W_1\subseteq \set W_2$ and $\set J$ are subspaces of $\C^n$ 
then $\eff(\set W_1, \set J)\le \set W_2, \set J)$.
\end{theorem}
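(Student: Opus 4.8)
The plan is to reduce \cref{thm:worse} to the monotonicity of the determinant on the cone of positive semidefinite matrices. First I would rewrite the efficiency in the form obtained in the proof of \cref{thm:eff_to_subspace}, namely $\eff(\set W, \set J) = \det(\ma J^*\ma P_{\set W}\ma J)/\det(\ma J^*\ma J)$, where $\ma P_{\set W} = \ma W\ma W^*$ is the orthogonal projector onto $\set W$ and $\ma J$ is any fixed matrix with $\Range \ma J = \set J$. The key observation is that the denominator $\det(\ma J^*\ma J) > 0$ does not depend on the subspace, so it suffices to show $\det(\ma J^*\ma P_{\set W_1}\ma J) \le \det(\ma J^*\ma P_{\set W_2}\ma J)$.

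Second, I would establish the Loewner ordering $\ma P_{\set W_1} \preceq \ma P_{\set W_2}$. Since $\set W_1 \subseteq \set W_2$ we have $\ma P_{\set W_1}\ma P_{\set W_2} = \ma P_{\set W_1}$, hence $\ve x^*\ma P_{\set W_1}\ve x = \|\ma P_{\set W_1}\ma P_{\set W_2}\ve x\|_2^2 \le \|\ma P_{\set W_2}\ve x\|_2^2 = \ve x^*\ma P_{\set W_2}\ve x$ for every $\ve x$; equivalently, $\ma P_{\set W_2} - \ma P_{\set W_1}$ is the orthogonal projector onto $\set W_2 \cap \set W_1^\perp$ and so is positive semidefinite. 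Conjugating by $\ma J$ preserves this ordering, giving $\ma 0 \preceq \ma J^*\ma P_{\set W_1}\ma J \preceq \ma J^*\ma P_{\set W_2}\ma J$, with both sides Hermitian.

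Third, I would invoke the monotonicity of the determinant under the Loewner order: if $\ma 0 \preceq \ma A \preceq \ma B$ are $q \times q$ Hermitian matrices then $\det \ma A \le \det \ma B$ \cite[\S7.7]{HJ85}. (If $\ma A$ is singular this is immediate; otherwise $\det \ma B = \det \ma A \cdot \det(\ma I + \ma A^{-1/2}(\ma B - \ma A)\ma A^{-1/2}) \ge \det \ma A$, since the second factor is the determinant of an identity plus a positive semidefinite matrix.) Applying this with $\ma A = \ma J^*\ma P_{\set W_1}\ma J$ and $\ma B = \ma J^*\ma P_{\set W_2}\ma J$, then dividing by $\det(\ma J^*\ma J)$, yields $\eff(\set W_1, \set J) \le \eff(\set W_2, \set J)$, the desired inequality.

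I do not expect a serious obstacle: the only point needing a word of care is the degenerate case $\dim \set W_1 < q$, where $\ma J^*\ma P_{\set W_1}\ma J$ is singular so \cref{thm:eff_to_subspace} does not apply literally; but then $\eff(\set W_1, \set J) = 0$ and the inequality holds trivially, so it can be disposed of in one line. Everything else is just the composition of two standard monotonicity facts: subspace inclusion implies the Loewner ordering of the associated orthogonal projectors, and the determinant respects the Loewner order on positive semidefinite matrices.
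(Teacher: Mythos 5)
Your proposal is correct and follows essentially the same route as the paper: establish the Loewner ordering $\ma P_{\set W_1}\preceq \ma P_{\set W_2}$ from the inclusion $\set W_1\subseteq \set W_2$, conjugate by a basis for $\set J$, and apply determinant monotonicity on the positive semidefinite cone (the paper cites \cite[Cor.~7.7.4]{HJ85} for this step). The only cosmetic difference is that the paper works directly with an orthonormal basis $\ma Q$ of $\set J$, so no division by $\det(\ma J^*\ma J)$ is needed; your extra care with the degenerate case $\dim\set W_1 < q$ is a welcome addition but does not change the argument.
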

\begin{proof}
	Let $\ma W_1$ and $\ma W_2$ be orthonormal bases for $\set W_1$ and $\set W_2$
	and let $\ma Q$ be an orthonormal basis for $\set J$. 
	Then $\ma W_1\ma W_1^*\preceq \ma W_2\ma W_2$ and by \cite[Cor.~7.7.4]{HJ85},
	\begin{equation*}
		\eff(\set W_1, \set J) = \det (\ma Q^*\ma W_1\ma W_1^*\ma Q) \le \det(\ma Q^*\ma W_2\ma W_2^*\ma Q) = 
			\eff(\set W_2, \set J).
	\end{equation*}
\end{proof}

Since our subspaces for multiple exponentials will be built from a union of subspaces for each exponential,
this second result shows that the union satisfies a necessary (but not sufficient)
condition for the combined subspace to have the same efficiency 
as each component subspace had for a single exponential.
\begin{theorem}\label{thm:upper}
	If $\set W$, $\lbrace \set J_k \rbrace_k$ are subspaces of $\C^n$,
		$\set J = \union_{k} \set J_k$, and the dimension of $\set W$ exceeds $\set J$,
		then $\eff(\set W, \set J) \le \min_k \eff(\set W, \set J_k)$.
\end{theorem}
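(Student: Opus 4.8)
The plan is to fix a single index $k$ and show $\eff(\set W,\set J)\le\eff(\set W,\set J_k)$; taking the minimum over $k$ then gives the theorem. Since $\set J_k\subseteq\set J$, I would pick an orthonormal basis $\ma Q=[\,\ma Q_k\ \ma Q_k'\,]$ for $\set J$ whose leading columns $\ma Q_k$ form an orthonormal basis for $\set J_k$ (Gram--Schmidt on any basis of $\set J$ that extends one of $\set J_k$). The dimension hypothesis gives $\dim\set W\ge\dim\set J\ge\dim\set J_k$, so \cref{thm:eff_to_subspace} applies to both subspaces and, using $\ma P_\set W=\ma W\ma W^*$ as in its proof, yields $\eff(\set W,\set J)=\det(\ma Q^*\ma P_\set W\ma Q)$ and $\eff(\set W,\set J_k)=\det(\ma Q_k^*\ma P_\set W\ma Q_k)$.

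Next I would set $\ma M:=\ma Q^*\ma P_\set W\ma Q$ and partition it into blocks compatible with $\ma Q=[\,\ma Q_k\ \ma Q_k'\,]$, so that its leading principal block is $\ma M_{11}=\ma Q_k^*\ma P_\set W\ma Q_k$ and its trailing block is $\ma M_{22}=(\ma Q_k')^*\ma P_\set W\ma Q_k'$. The matrix $\ma M$ is Hermitian positive semidefinite, so Fischer's inequality (see, e.g.,~\cite[\S7.8]{HJ85}) gives $\det\ma M\le\det\ma M_{11}\det\ma M_{22}$. Because $\ma P_\set W$ is an orthogonal projector, $\ma M=\ma Q^*\ma P_\set W\ma Q\preceq\ma Q^*\ma Q=\ma I$, hence $\ma M_{22}\preceq\ma I$, so $\det\ma M_{22}\le 1$, while $\det\ma M_{11}\ge 0$. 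Combining, $\eff(\set W,\set J)=\det\ma M\le\det\ma M_{11}\det\ma M_{22}\le\det\ma M_{11}=\eff(\set W,\set J_k)$. If $\set J_k=\set J$ the trailing block is absent, $\det\ma M_{22}=1$ by the empty-determinant convention, and the bound holds with equality.

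I expect the argument to be short, with the only bookkeeping being to check that \cref{thm:eff_to_subspace} is legitimately applicable to both $\set J$ and $\set J_k$ (immediate from the dimension hypothesis) and that $\ma Q$ may be chosen to extend a basis of $\set J_k$ (immediate from $\set J_k\subseteq\set J$). The entire mathematical content sits in Fischer's inequality together with the projector bound $\ma Q^*\ma P_\set W\ma Q\preceq\ma I$; equivalently one could argue from Cauchy interlacing, using that the eigenvalues of $\ma M$ are the $\cos^2\phi_i(\set W,\set J)\in[0,1]$ and that those of the principal submatrix $\ma M_{11}$ interlace them, but the determinant computation above is the most direct route.
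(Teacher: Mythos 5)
Your proof is correct, and it shares the paper's skeleton (extend an orthonormal basis of $\set J_k$ to one of $\set J$, identify each efficiency as a determinant/product of squared singular values) but closes the argument with a different key lemma. The paper works directly with the singular values of $\ma W^*\ma Q$: it discards the $q_2$ largest factors using $\sigma_k(\ma W^*\ma Q)\le 1$ and then invokes the interlacing of singular values under column deletion \cite[Cor.~3.1.3]{HJ91} to compare $\sigma_{q_2+k}(\ma W^*\ma Q)$ with $\sigma_k(\ma W^*\ma Q_1)$. You instead apply Fischer's determinant inequality to the Gram matrix $\ma M=\ma Q^*\ma P_\set W\ma Q=(\ma W^*\ma Q)^*(\ma W^*\ma Q)$ and bound the complementary block by $\ma M_{22}\preceq\ma I$. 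The two routes are essentially equivalent in strength here (both amount to the statement that passing to a principal submatrix of a PSD contraction cannot decrease the determinant), and both rest on standard Horn--Johnson facts; yours has the mild advantage of never naming singular values of the off-square factor and of handling the degenerate case $\det\ma M=0$ without comment, while the paper's version makes the connection to the subspace angles $\phi_k$ slightly more visible. Your side remarks---that the dimension hypothesis is what licenses \cref{thm:eff_to_subspace} for both $\set J$ and $\set J_k$, and that Cauchy interlacing of the eigenvalues of $\ma M_{11}$ inside those of $\ma M$ gives yet another equivalent route---are accurate.
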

\begin{proof}
	Let $\ma Q = [\ma Q_1,\ma Q_2]$, where $\ma Q_1\in \C^{n\times q_1}$ and $\ma Q_2\in\C^{n\times q_2}$,
	be an orthonormal basis for $\set J$ such that $\ma Q_1\in \C^{n\times q_1}$
	is a basis for $\set J_\ell$ 
	and let $\ma W$ be an orthonormal basis for $\set W$.
	Then as $\sigma_k(\ma W^*\ma Q) \le 1$, 
	\begin{equation*}
		\eff(\set W, \set J) = \prod_{k=1}^{q_1+q_2} \sigma_k(\ma W^*\ma Q)^2 \le 
			\prod_{k=1}^{q_1} \sigma_{q_2 + k}(\ma W^*\ma Q)^2 
			\le \prod_{k=1}^{q_1} \sigma_k(\ma W^*\ma Q_1)^2
			= \eff(\set W, \set J_\ell),
	\end{equation*}
	where the second inequality follows from deleting the last $q_2$ columns of $\ma W^*\ma Q$
	and applying~\cite[Cor.~3.1.3]{HJ91}.
	The result follows by repeating this process for each $\set J_\ell$.
\end{proof}

The final result provides a lower bound on the efficiency for a nearby Jacobian.
This bound is used in \cref{sec:subspace:building} to convert 
a check for efficiency over a continuous set of subspaces $\set J(\ve\theta)$ into 
a check over a discrete set.

\begin{theorem}\label{thm:nearby}
	If $\set W$, $\set J_1$, and $\set J_2$ are subspaces of $\C^n$ and $\set J_1$ and $\set J_2$
	have the same dimension, then
	$\eff(\set W, \set J_2) \eff(\set J_1,\set J_2) \le \eff(\set W,\set J_1)$.
\end{theorem}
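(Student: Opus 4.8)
The plan is to turn the inequality into a statement about the ordering of positive semidefinite matrices, using the determinant form of efficiency that appears inside the proof of \cref{thm:eff_to_subspace}. Write $\ma P_{\set W} := \ma W \ma W^*$ for the orthogonal projector onto $\set W$, set $q := \dim \set J_1 = \dim \set J_2$, and let $\ma Q_1, \ma Q_2$ be orthonormal bases for $\set J_1, \set J_2$. That proof gives $\eff(\set W, \set J_i) = \det(\ma Q_i^* \ma P_{\set W}\ma Q_i)$, and, since $\set J_1$ and $\set J_2$ have equal dimension, $\ma Q_2^*\ma Q_1$ is square and $\eff(\set J_1,\set J_2) = \det(\ma Q_2^*\ma Q_1\ma Q_1^*\ma Q_2) = |\det(\ma Q_2^*\ma Q_1)|^2$. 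It therefore suffices to prove
\begin{equation*}
	\det\!\bigl(\ma Q_1^*\ma P_{\set W}\ma Q_1\bigr) \ \ge\ \bigl|\det(\ma Q_2^*\ma Q_1)\bigr|^2\,\det\!\bigl(\ma Q_2^*\ma P_{\set W}\ma Q_2\bigr),
\end{equation*}
and since both sides are independent of the chosen orthonormal bases, I am free to pick $\ma Q_1$ and $\ma Q_2$ conveniently.

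The next step is to resolve $\ma Q_1$ along $\set J_2$ and its orthogonal complement. With $\ma C := \ma Q_2^*\ma Q_1$ and $\ma E := (\ma I - \ma P_{\set J_2})\ma Q_1$ one has $\ma Q_1 = \ma Q_2\ma C + \ma E$, $\ma Q_2^*\ma E = \ma 0$, and $\ma C^*\ma C + \ma E^*\ma E = \ma I$. Expanding, $\ma Q_1^*\ma P_{\set W}\ma Q_1 = \ma C^*\ma A\ma C + (\ma C^*\ma B + \ma B^*\ma C) + \ma D$, where $\ma A := \ma Q_2^*\ma P_{\set W}\ma Q_2$, $\ma B := \ma Q_2^*\ma P_{\set W}\ma E$, $\ma D := \ma E^*\ma P_{\set W}\ma E$, and the $2\times 2$ block matrix with blocks $\ma A, \ma B, \ma B^*, \ma D$ equals $[\,\ma Q_2\ \ \ma E\,]^*\ma P_{\set W}[\,\ma Q_2\ \ \ma E\,]$ and hence is positive semidefinite. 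If $\ma A$ is singular then $\eff(\set W,\set J_2) = 0$ and the claim is trivial, so assume $\ma A \succ 0$; completing the square and using positive semidefiniteness of the Schur complement $\ma D - \ma B^*\ma A^{-1}\ma B$ gives the clean lower bound $\ma Q_1^*\ma P_{\set W}\ma Q_1 \succeq \ma G^*\ma G$ with $\ma G := \ma A^{-1/2}(\ma A\ma C + \ma B) = \ma A^{-1/2}\ma Q_2^*\ma P_{\set W}\ma Q_1$. Taking determinants, and using multiplicativity together with the monotonicity of the determinant on positive semidefinite matrices (\cite[Cor.~7.7.4]{HJ85}, as already used for \cref{thm:worse}), the claim reduces to the single scalar inequality $|\det(\ma Q_2^*\ma P_{\set W}\ma Q_1)| \ge |\det\ma C|\,\det\ma A = |\det(\ma Q_2^*\ma P_{\set W}\ma P_{\set J_2}\ma Q_1)|$.

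This last inequality is the step I expect to be the main obstacle: it amounts to controlling the indefinite cross terms $\ma C^*\ma B + \ma B^*\ma C$ tightly enough to keep a positive semidefinite ordering, and it is here that the hypothesis $\dim\set J_1 = \dim\set J_2$ must genuinely do work. My plan is to specialize to the canonical bases realizing the principal angles $\phi_k(\set J_1,\set J_2)$, so that $\ma C$ becomes diagonal with entries $\cos\phi_k(\set J_1,\set J_2)$ and $\ma E$ takes the parallel form (orthonormal columns scaled by $\sin\phi_k(\set J_1,\set J_2)$), and then to check that in these coordinates the cross contribution is dominated by $\ma D$ together with the slack $\ma A - \ma C^*\ma A\ma C \succeq \ma 0$. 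A cleaner alternative, valid at least when $\set J_2 \subseteq \set W$, is to pass to the $q$-th exterior power $\wedge^q\C^n$: there $\eff(\set W,\set J)$ is the squared norm of the orthogonal projection of a unit decomposable $q$-vector for $\set J$ onto $\wedge^q\set W$, while $\eff(\set J_1,\set J_2)$ is the squared modulus of the inner product of the two decomposable $q$-vectors, so the inequality becomes an instance of Cauchy--Schwarz; I would look for a way to reduce the general case to (a perturbation of) this one.
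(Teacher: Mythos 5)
Your reductions are all correct and they take you essentially as far as it is possible to go: the identity $\eff(\set W,\set J_i)=\det(\ma Q_i^*\ma P_{\set W}\ma Q_i)$, the resolution $\ma Q_1=\ma Q_2\ma C+\ma E$, the positive semidefiniteness of the block Gram matrix, and the Schur-complement bound $\ma Q_1^*\ma P_{\set W}\ma Q_1\succeq\ma G^*\ma G$ with $\ma G=\ma A^{-1/2}\ma Q_2^*\ma P_{\set W}\ma Q_1$ are all sound, and they correctly isolate $|\det(\ma Q_2^*\ma P_{\set W}\ma Q_1)|\ge|\det(\ma Q_2^*\ma Q_1)|\det(\ma Q_2^*\ma P_{\set W}\ma Q_2)$ as the crux. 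You should stop there, because that inequality --- and the theorem itself --- is false. Take $n=2$, $\set W=\mathrm{span}(e_1)$, $\set J_1=\mathrm{span}(e_2)$, $\set J_2=\mathrm{span}\bigl((e_1+e_2)/\sqrt2\bigr)$: then $\eff(\set W,\set J_1)=0$ while $\eff(\set W,\set J_2)\,\eff(\set J_1,\set J_2)=\tfrac12\cdot\tfrac12=\tfrac14$, and in your notation $\ma Q_2^*\ma P_{\set W}\ma Q_1=0$ while $|\det\ma C|\det\ma A=\tfrac{1}{2\sqrt2}$. So neither the principal-angle coordinates nor the exterior-power/Cauchy--Schwarz route can rescue the last step; the cross term $\ma C^*\ma B+\ma B^*\ma C$ you were worried about really does destroy the claimed ordering.

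For comparison, the paper's proof is a one-liner: it asserts the operator inequality $\ma P_{\set W}\succeq\ma P_{\set J_2}\ma P_{\set W}\ma P_{\set J_2}$, conjugates by $\ma Q_1$, and applies the determinant monotonicity of \cite[Cor.~7.7.4]{HJ85}. The same example refutes that operator inequality, since $e_2^*(\ma P_{\set W}-\ma P_{\set J_2}\ma P_{\set W}\ma P_{\set J_2})e_2=-\tfrac14$, so the published argument fails at its very first step; your more careful decomposition has the virtue of exposing exactly where the multiplicative ``triangle inequality'' for efficiencies breaks down. What is true is the special case $\set J_2\subseteq\set W$ (there $\eff(\set W,\set J_2)=1$ and the claim follows from \cref{thm:worse} together with the symmetry of $\eff$ on equidimensional subspaces), which is precisely the regime your Cauchy--Schwarz remark covers. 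Any general substitute must either be additive in the angles themselves rather than multiplicative in their cosines, or carry an extra hypothesis keeping $\set W$ quantitatively bounded away from orthogonality to $\set J_1$; as stated, the theorem cannot be proved.
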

\begin{proof}
	Let $\ma Q_1$ and $\ma Q_2$ be orthonormal bases for $\set J_1$ and $\set J_2$.
	As $\ma P_\set W \!\succeq \! \ma P_{\set J_2}\ma P_\set W \ma P_{\set J_2}$
	we obtain the lower bound after application of~\cite[Cor.~7.7.4]{HJ85}
	\begin{multline*}
		\eff(\set W, \set J_1) = \det(\ma Q_1^*\ma P_\set W \ma Q_1) \ge 
			\det(\ma Q_1^*\ma Q_2\ma Q_2^*\ma P_\set W\ma Q_2\ma Q_2^*\ma Q_1)\\
			= \det(\ma Q_2^*\ma P_\set W\ma Q_2)\det(\ma Q_1^*\ma Q_2\ma Q_2^*\ma Q_1)
			= \eff(\set W, \set J_2) \eff(\set J_1, \set J_2).
	\end{multline*}
\end{proof}

With these general results from the two preceding sections complete, 
we now turn to the specifics of the exponential fitting problem.

\section{Fast inner-products for exponential fitting\label{sec:closed}}
In the two preceding sections, we have argued that subspaces $\set W$ should be chosen
such that the subspace angles between $\set W$ and the range of the Jacobian are small.
Now, in this section we turn to the specific problem of selecting a family of subspaces $\set W$ 
for the exponential fitting problem
that not only satisfy this requirement, but also have orthonormal bases $\ma W$
such that projected model $\ma W^*\ve f([\ve\omega, \ve a])$ and 
projected Jacobian $\ma W^*\ma J([\ve\omega,\ve a])$ 
can be inexpensively computed in fewer than $\order(n)$ operations.
For the exponential fitting problem we chose the subspace $\set W(\ve\mu)$
parameterized by $\ve\mu \in \C^m$ with corresponding orthonormal basis $\ma W(\ve\mu) \in \C^{n\times m}$ 
\begin{equation}
	\set W(\ve\mu) := \Range \ma V(\ve \mu), \qquad \ma W(\ve\mu) := \ma V(\ve\mu)\ma R(\ve\mu)^{-1},
	\qquad \ma R(\ve\mu) \in \C^{m \times m}
\end{equation}
where $\ma V(\ve\mu) \in \C^{n \times m}$ is the Vandermonde matrix $[\ma V(\ve\mu)]_{j,k} = e^{j \mu_k}$
and $\ma R(\ve\mu)$ is constructed as described in \cref{sec:closed:ortho}
such that $\ma W(\ve\mu)$ has orthonormal columns.
We call the parameters $\ve\mu$ \emph{interpolation points} since 
if the entries of $\ve\omega$ are a subset of the entries of $\ve\mu$,
then the projected model interpolates the full model:
\begin{equation}
	\ve \omega \subset \ve\mu \quad \Longrightarrow \quad 
	\ma P_{\set W(\ve\mu)}\ve f([\ve\omega, \ve a]) = 
	\ma P_{\Range \ma V(\ve\mu)} \ma V(\ve\omega) \ve a =
		\ma V(\ve\omega) \ve a = 		 
		\ve f([\ve\omega, \ve a]). 
\end{equation}
In this section we show how to inexpensively compute the product of
$\ma W(\ve\mu)$ with the exponential fitting 
model $\ve f([\ve\omega, \ve a])$ and Jacobian $\ma J([\ve\omega, \ve a])$,
\begin{equation}
	\ma J([\ve\omega, \ve a]) := 
		\begin{bmatrix} \ma V'(\ve\omega) \diag(\ve a) & \ma V(\ve\omega) \end{bmatrix}, \qquad
		[\ma V'(\ve\omega)]_{j,k} = \frac{\partial}{\partial \omega_k} [\ma V(\ve\omega)]_{j,k} = j e^{j \omega_k}.
\end{equation}
Examining the products $\ma W(\ve\mu)^*\ve f([\ve\omega, \ve a])$ and $\ma W(\ve\mu)^*\ma J([\ve\omega, \ve a])$,
\begin{align}
	\ma W(\ve\mu)^*\ve f([\ve\omega, \ve a]) &=
		 \, \ma R(\ve\mu)^{-*}\ma V(\ve\mu)^*\ma V(\ve\omega) \ve a \\
	\ma W(\ve\mu)^*\ma J([\ve\omega, \ve a]) &=
		 \begin{bmatrix}
			\ma R(\ve\mu)^{-*}\ma V(\ve\mu)^*\ma V'(\ve\omega) &
			\ma R(\ve\mu)^{-*}\ma V(\ve\mu)^*\ma V(\ve\omega) \diag \ve a
		\end{bmatrix},
\end{align}
reveals two matrix multiplications of size $n$, 
$\ma V(\ve\mu)^*\ma V(\ve\omega)$ and $\ma V(\ve\mu)^*\ma V'(\ve\omega)$,
that need to be inexpensively computed.
Here we use the geometric sum formula and generalization provided by \cref{thm:polyexp_sum} in \cref{sec:geometric},
to compute the entries of these products in closed form.
Unfortunately, these formulas exhibit catastrophic cancellation in finite precision arithmetic
necessitating careful modifications to obtain high relative accuracy
as described in \cref{sec:closed:geometric} for $\ma V(\ve\mu)^*\ma V(\ve\omega)$
and in \cref{sec:closed:derivative} for $\ma V(\ve\mu)^*\ma V'(\ve\omega)$.
Additionally, we discuss how to compute $\ma R(\ve\mu)$ inexpensively from $\ma V(\ve\mu)^*\ma V(\ve\mu)$
in \cref{sec:closed:ortho}.
The choice of interpolation points $\ve\mu$ is later discussed in \cref{sec:subspace}
and combined with these results yields our algorithm for exponential fitting described in \cref{sec:algorithm}.

\subsection{Geometric sum\label{sec:closed:geometric}}
Each entry in the product of two Vandermonde matrices $\ma V(\ve\mu)^*\ma V(\ve\omega)$
is a geometric sum and hence has a closed form expression via the \emph{geometric sum formula}:
\begin{equation}\label{eq:VV}
	[\ma V(\ve\mu)^*\ma V(\ve\omega)]_{j,k} = 
		\sum_{\ell=0}^{n-1} e^{\conj \mu_j \ell} e^{\omega_k \ell}
		= \begin{cases}
			\displaystyle 
				\frac{ 1 - e^{n(\conj \mu_j + \omega_k)}}{1 - e^{\conj \mu_j + \omega_k}}, & e^{\conj \mu_j + \omega_k}\ne 1; \\
			\displaystyle n, & e^{\conj\mu_j + \omega_k} = 1.
		\end{cases}
\end{equation} 
In finite precision arithmetic this formula exhibits catastrophic cancellation
when $e^{\conj\mu_j + \omega_k} \approx 1$.
Fortunately, many standard libraries provide the special function {\tt expm1} 
that evaluates $e^x - 1$ to high relative accuracy.
However, even with this special function, 
there is still a removable discontinuity at $e^{\conj\mu_j + \omega_k} = 1$.
Hence in floating point we patch this function using a two-term Taylor series expansion around 
$e^{\conj\mu_j + \omega_k} = 1$: 
\begin{equation}
	[\ma V(\ve\mu)^*\ma V(\ve\omega)]_{j,k}  
		= \begin{cases}
			\displaystyle 
				\frac{\expm(n(\conj \mu_j + \omega_k))}{\expm(\conj \mu_j + \omega_k)}, 
					& |\expm(\conj\mu_j + \omega_k)|> 10^{-15}; \\
			\displaystyle n(1 + (n-1)(\conj\mu_j + \omega_k)/2), & |\expm(\conj\mu_j + \omega_k)| \le 10^{-15}.
		\end{cases}
\end{equation}
In our numerical experiments, this expression has a relative accuracy of $\sim \! 10^{-16}$
when compared to a 500-digit reference evaluation of~\cref{eq:VV} using {\tt mpmath}~\cite{mpmath}. 

\subsection{Geometric sum derivative\label{sec:closed:derivative}}
Entries of the product $\ma V'(\ve\mu)^*\ma V(\ve\omega)$ are no longer a geometric sum,
but a \emph{generalized geometric sum}
that has a closed form expression given by \cref{thm:polyexp_sum} in \cref{sec:geometric}:
\begin{equation}\label{eq:VVp_exact}
	\begin{split}
	[\ma V(\ve\mu)^*\ma V'(\ve\omega)]_{j,k}
	&= \sum_{\ell=0}^{n-1} \ell e^{\conj \mu_j \ell} e^{\omega_k \ell} \\
	&= \begin{cases}\displaystyle
		\frac{ -n e^{n (\conj \mu_j + \omega_k )}}{1-e^{ \conj \mu_j + \omega_k }}
			+ \frac{e^{\omega_k +\conj{\mu_j}} (1-e^{n(\omega_k +\conj{\mu_j})})}{(1-e^{\omega_k +\conj{\mu_j}})^2}, 
			& e^{\omega_k +\conj{\mu_j}} \ne 1;\\
			\displaystyle n(n-1)/2, & e^{\omega_k + \conj{\mu_j}} = 1.	
	\end{cases}
	\end{split}
\end{equation}
As with the geometric sum formula, this expression also exhibits catastrophic cancellation
but this can no longer be fixed using standard special functions. 
Instead, we derive a more accurate expression in floating point arithmetic 
by rearranging the expression in the first case and using a Taylor series about $e^{\conj \mu_j + \omega_k} = 1$.
Defining $\delta_{j,k} := \conj \mu_j + \omega_k \in \R\times [-\pi/2,\pi/2)i$
(removing periodicity in the imaginary part)
the first case of \cref{eq:VVp_exact} can be rearranged to yield
\begin{equation}\label{eq:VVp}
	\frac{ -n e^{n \delta_{j,k}}}{1-e^{\delta_{j,k}}}
		+ \frac{e^{\delta_{j,k} } (1-e^{n \delta_{j,k}})}{(1-e^{\delta_{j,k}})^2} 
	= \frac{1 - e^{n\delta_{j,k}}}{1 - e^{\delta_{j,k}}}
	\left[ 
	 \frac{e^{\delta_{j,k}}}{1 - e^{\delta_{j,k}}}
		-\frac{n e^{n \delta_{j,k}}}{1 - e^{n\delta_{j,k}}} 
	\right].
\end{equation}
Although this expression on the right displays even worse catastrophic cancellation
than the expression on the left, 
the first term can be computed using $\tt expm1$
and the expression inside the brackets has a rapidly converging Taylor series:
\begin{multline*}
	\frac{e^{\delta}}{1 - e^{\delta}}
		-\frac{n e^{n \delta}}{1 - e^{n\delta}}
	= \frac{n-1}{2} + \frac{(n^2-1)\delta}{12} - \frac{(n^4-1)\delta^3}{720}
		+ \frac{(n^6-1)\delta^5}{30240} 
	- \frac{(n^8-1)\delta^7}{1209600} \\ + \frac{(n^{10}-1)\delta^9}{47900160}
	- \frac{691(n^{12}-1)\delta^{11}}{1307674368000} + \order(n^{14}\delta^{13}).
\end{multline*}
Calling the first seven terms of this expansion the special function $\expdiff(n,\delta)$, 
we then evaluate the product $\ma V(\ve\mu)^*\ma V'(\ve\omega)$ in finite precision arithmetic using 
\begin{equation}
	[\ma V(\ve \mu)^* \ma V'(\ve\omega)]_{j,k}
	= \begin{cases}
		\displaystyle \frac{n e^{n \delta_{j,k}} \expm(\delta_{j,k}) - e^{\delta_{j,k}} \expm(\delta_{j,k} n) }{[\expm(\delta_{j,k})]^2},
			 & |\delta_{j,k}| > 0.5/n;\\
		\displaystyle \frac{\expm(n \delta_{j,k})}{\expm(\delta_{j,k})}\expdiff(n,\delta_{j,k}),	
		  &	0 < |\delta_{j,k}| \le 0.5/n;\\
		\displaystyle n(n-1)/2, & \delta_{j,k} = 0.
	\end{cases}
\end{equation}
In our numerical experiments, 
this expression has a relative accuracy of $\sim\! 10^{-15}$ when compared to a 500-digit reference evaluation
of \cref{eq:VVp} using {\tt mpmath}.

\subsection{Orthogonalization\label{sec:closed:ortho}}
Finally we need to compute the matrix $\ma R(\ve \mu)$ 
such that $\ma V(\ve \mu) \ma R(\ve \mu)^{-1}$ has orthonormal columns inexpensively.
One approach would be to simply take the QR-factorization of $\ma V(\ve\mu )$,
but this is has an $\order(n)$-dependent cost.
Instead, our approach is to form $\ma V(\ve\omega)^*\ma V(\ve\omega)$ using~\cref{eq:VV}
and either take its Cholesky decomposition or its eigendecomposition to compute $\ma R(\ve\mu)$
\begin{align*}
	\ma V(\ve \mu)^* \ma V(\ve\mu) &= \ma R(\ve\mu)\ma R(\ve\mu)^*,  \quad &&\text{(Cholesky)}, \\
	\ma V(\ve \mu)^*\ma V(\ve\mu) &= \ma U(\ve \mu)\ma \Lambda(\ve\mu) \ma U(\ve\mu)^*, 
		\quad &&\text{(eigendecomposition)}, \quad
		\ma R(\ve\mu) = \ma U(\ve\mu) \ma \Lambda(\ve\mu)^{1/2}.
\end{align*} 
Although the Cholesky decomposition should be preferred since $\ma V(\ve\mu)^*\ma V(\ve\mu)$
is positive definite provided each of the $\lbrace e^{\mu_j} \rbrace_j$ are distinct,
in finite precision arithmetic this product can have small negative eigenvalues.
Instead we compute $\ma R(\ve\mu)^{-1}$ using the eigendecomposition, 
truncating the small ($<10^{-14}$) eigenvalues.

\section{A subspace for exponential fitting\label{sec:subspace}}
With the results of the previous section, 
we can now inexpensively project the model and Jacobian onto the subspace $\set W(\ve\mu)$.
However, this leaves one question: how do we choose the interpolation points $\ve\mu$ such that 
the subspace angles between $\set W(\ve\mu)$ and the exponential fitting Jacobian 
\begin{equation}
	\ma J([\ve\omega, \ve a]) = 
		\begin{bmatrix} \ma V'(\ve\omega) \diag(\ve a) & \ma V(\ve\omega) \end{bmatrix}, \qquad
		[\ma V'(\ve\omega)]_{j,k} = \frac{\partial}{\partial \omega_k} [\ma V(\ve\omega)]_{j,k} = j e^{j \omega_k}.
\end{equation}
are small?
Immediately we note that these subspace angles do not depend on $\ve a$ 
(if any entry of $\ve a$ was zero, we would instead fit fewer exponentials);
hence we define
\begin{equation}
	\set J(\ve\omega) := \Range \begin{bmatrix} \ma V'(\ve\omega) & \ma V(\ve\omega) \end{bmatrix}.
\end{equation}
Structurally, it may seem impossible to have small subspace angles between $\set W(\ve\mu)$ 
and $\set J(\ve\omega)$ since $\set W(\ve\mu) = \Range \ma V(\ve\mu)$ does not contain any columns from $\ma V'$.
However, since columns of $\ma V'$ are the derivatives of the columns of $\ma V$,
we can approximate the range of$ \ma V'$ using finite differences
\begin{equation}
	\ma V(\omega) \approx \frac{ \ma V(\omega + \delta_+) + \ma V(\omega + \delta_-)}{2} \qquad
	\ma V'(\omega) \approx \frac{ \ma V(\omega + \delta_+) - \ma V(\omega + \delta_-)}{\delta_+ - \delta_-}.
\end{equation}
Hence, for an appropriate choice of interpolation points, the subspace angles between $\set W(\ve\mu)$
and $\set J(\ve\omega)$ are small;
for example, the interpolation points 
$\delta_\pm(\omega) = 0.8 \Re\omega \pm \max \lbrace -0.52\Re\omega, 1.39/n \rbrace$
yields a subspace with $95\%$ efficiency for any $\omega\in (-\infty,0]\times [-\pi,\pi)i$.
Here our approach for selecting interpolation points is to divide the parameter space 
for a single exponential with frequency $\omega \in (-\infty,0]\times [-\pi,\pi)$ into a series of boxes
as shown in \cref{fig:box_ring}.
These boxes have been constructed such that when the corners of the box containing $\omega$
are taking as interpolation points, the efficiency of this subspace is at least $95\%$
and hence the subspace angles between $\set W(\ve\mu)$ and $\set J(\ve\omega)$ are small.
Then, for multiple exponentials, we simply combine the subspaces generated by this heuristic,
justified by~\cref{thm:upper} that this combination is a necessary condition
for the combined subspace to also have at least $95\%$ efficiency.
There are several advantages to this box partition approach.
By limiting ourselves to a finite number of interpolation points,
we can frequently reuse the multiplication $\ma V(\mu_j)^*\tve y$ as the subspace updates.
Moreover, constructing this subspace is inexpensive, 
allowing frequent updates during optimization.
In the remainder of this section we first discuss a practical algorithm for building this box partition
for any target efficiency and then give the coordinates for box partition with $95\%$ efficiency. 

\begin{figure}
\begin{center}
\begin{tikzpicture}
	\begin{groupplot}[
			group style = {
				group size =2 by 1,
				horizontal sep = 6em,
			},
			width = 0.4\linewidth,
		]
		\nextgroupplot[xmin=-2, xmax=0, ymin=-0.5, ymax=0.5,
			xlabel=$\Re\omega$, ylabel=$\Im\omega$,
			ytick = {-0.5, -0.25, 0, 0.25,0.5},
			yticklabels = {$-\pi$, $-\pi/2$, $0$, $\pi/2$, $\pi$},
			xtick = {-1.447, -0.7181, -0.3812,-0.1924,0},
			xticklabels = {$\alpha_1$, $\alpha_2$, $\alpha_3$, $\alpha_4$, $0$},
			width = 0.45\linewidth,
			height = 0.45\linewidth,
		]

		\addplot[colorbrewerA2, fill, fill opacity = 0.5, draw = none ] table [x=x,y expr = \thisrow{y}/(2*pi)] {fig_box_ring_eff.dat}; 
		\addplot[black, mark=*, only marks] coordinates 
			{(-0.7181,0) (-0.3812,0) (-0.3812,1/8) (-0.7181, 1/8) };
		\addplot[fill opacity=1,fill =colorbrewerA1] coordinates 
			{(-0.7181,0) (-0.3812,0) (-0.3812,1/8) (-0.7181, 1/8) };

		\addplot[black, no marks] coordinates {(0,-0.5) (0,0.5)};%
		\foreach \n in {0, ..., 32}{%
			\addplot[black, no marks] coordinates {(0,\n/32 - 0.5) (-0.1924,\n/32 -0.5)};%
		} %
		\addplot[black, no marks] coordinates {(-0.1924,-0.5) (-0.1924, 0.5)};%
		\foreach \n in {0, ..., 16}{%
			\addplot[black, no marks] coordinates {(-0.1924,\n/16 - 0.5) (-0.3812,\n/16 -0.5)};%
		} %
		\addplot[black, no marks] coordinates {(0,-0.5) (0,0.5)};%
		\foreach \n in {0, ..., 16}{%
			\addplot[black, no marks] coordinates {(0,\n/16 - 0.5) (-0.3812,\n/16 -0.5)};%
		} %
		\addplot[black, no marks] coordinates {(-0.3812,-0.5) (-0.3812, 0.5)};%
		\foreach \n in {0, ..., 8}{%
			\addplot[black, no marks] coordinates {(-0.3812,\n/8-0.5) (-0.7181,\n/8-0.5)};%
		} %
		\addplot[black, no marks] coordinates {(-0.7181,-0.5) (-0.7181,0.5)};%
		\foreach \n in {0, ..., 4}{%
			\addplot[black, no marks] coordinates {(-1.447,\n/4-0.5) (-0.7181,\n/4-0.5)};%
		} %
		\addplot[black, no marks] coordinates {(-1.447,-0.5) (-1.447,0.5)};%
		\foreach \n in {0, ..., 2}{%
			\addplot[black, no marks] coordinates {(-1.447,\n/2-0.5) (-5,\n/2-0.5)};%
		}%
		\node (box-right) at (axis cs: 0.2, 0) {};

		\addplot[black, mark=x, thick, mark size=4pt] coordinates {(-0.5,0.07)};

		\nextgroupplot[
		xmin=-1.01,ymin=-1.01, xmax=1.1, ymax=1.01,
		width=0.46\linewidth, height=0.45\linewidth,
		 axis lines=none,
		axis equal,
		xticklabels = {},
		yticklabels = {},
		]

		\addplot[colorbrewerA2, fill, fill opacity = 0.5, draw = none ] table [x=x,y = y] {fig_box_ring_exp_eff.dat}; 
		\draw[fill=colorbrewerA1, fill opacity = 1] 
			(axis cs: 0.4928,0) -- (axis cs:0.6916,0) arc 
				[start angle=0, end angle=45, radius={transformdirectionx(0.6916)}]
			-- (axis cs:{0.4928*cos(45)}, {0.4928*sin(45)})
				arc [start angle = 45, end angle=0, radius={transformdirectionx(0.4928)}]
			-- cycle;
		\addplot[black, mark=*, only marks] coordinates 
			{({exp(-0.7181)},0) ({exp(-0.3812)},0) ( {exp(-0.3812)*cos(45)}, {exp(-0.3812)*sin(45)} ) ( {exp(-0.7181)*cos(45)}, {exp(-0.7181)*sin(45)}) };
		\draw (axis cs:0,0) circle [black, radius=0.2221];
		\draw (axis cs:0,0) circle [black, radius=0.4928];
		\draw (axis cs:0,0) circle [black, radius=0.6916];
		\draw (axis cs:0,0) circle [black, radius=0.8246];
		\draw (axis cs:0,0) circle [black, radius=1.0];
		\node (ring-left) at (axis cs:-1.2, 0) {};

		\addplot[black, no marks] coordinates {(-0.2221, 0) (0.2221,0)};
		\foreach \n in {0,...,3}{
			\addplot[black, no marks] coordinates {
				({0.2221*cos(360*\n/4)}, {0.2221*sin(360*\n/4)}) 
				({0.4928*cos(360*\n/4)}, {0.4928*sin(360*\n/4)})
				};
		}
		\foreach \n in {0,...,7}{
			\addplot[black, no marks] coordinates {
				({0.4928*cos(360*\n/8)}, {0.4928*sin(360*\n/8)})
				({0.6916*cos(360*\n/8)}, {0.6916*sin(360*\n/8)}) 
				};
		}
		\foreach \n in {0,...,15}{
			\addplot[black, no marks] coordinates {
				({0.6916*cos(360*\n/16)}, {0.6916*sin(360*\n/16)}) 
				({0.8246*cos(360*\n/16)}, {0.8246*sin(360*\n/16)}) 
				};
		}
		\foreach \n in {0,...,31}{
			\addplot[black, no marks] coordinates {
				({0.8246*cos(360*\n/32)}, {0.8246*sin(360*\n/32)}) 
				({1*cos(360*\n/32)}, {1*sin(360*\n/32)}) 
				};
		}
		\addplot[black, mark=x, thick, mark size=4pt] coordinates {(0.54880534905805234,0.25824819460494125)};

	\end{groupplot}
		\draw[black, thick, ->] ([xshift=5pt]$(group c1r1.east)$) -- ([xshift=-5pt]$(group c2r1.west)$) node[midway, above]
			{$x\mapsto e^x$};
\end{tikzpicture}
\end{center}
\caption{An illustration of the box partition of the parameter space 
	$\ve\omega \in (-\infty,0] \times [-\pi,\pi)i$ for exponential fitting. 
	For a given exponential with frequency $\omega$ denoted by $\times$,
	we select the box containing it, denoted in red, and 
	the corresponding four interpolation points at the corners, denoted by $\bullet$.
	The blue shaded region shows the set of $\omega$ 
	where the efficiency at $\omega$ using this subspace is at least $95\%$.
	This region includes the entire box and extends outward.
	The figure on the right shows the same features under the exponential map,
	exposing the periodicity of the parameter space.
}
\label{fig:box_ring}
\end{figure}
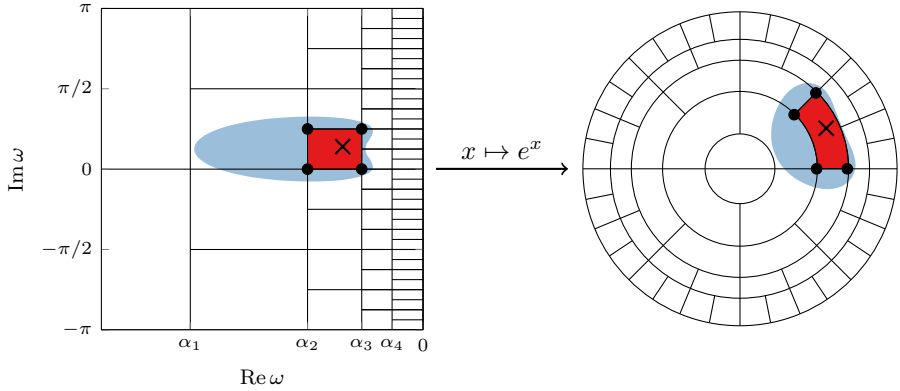

\subsection{Building the partition\label{sec:subspace:building}}
Our goal in constructing the box partition is to guarantee that the target efficiency
$\eff_\text{target}$ is obtained for every single exponential with frequency $\omega$ inside the box.
This is an expensive task, so we build our boxes to simplify the verification process.
As shown in \cref{fig:box_ring}, our box partition consists of a series of stacks
where each stack has twice as many boxes as the one on its left, 
evenly dividing the imaginary component of the parameter space.
Then, as efficiency depends on $\delta_{j,k} = \conj \mu_j + \omega_k$,
cf.~\cref{eq:VV} and \cref{eq:VVp_exact},
simultaneously shifting the imaginary parts of $\mu_j$ and $\omega_k$ does not change $\delta_{j,k}$
and hence verifying that one box in the stack obtains the target efficiency for each $\omega$ inside
establishes the same for the remaining boxes in the stack.
With this construction, there is only one set of free parameters:
the real coordinates of each box $\lbrace \alpha_\ell\rbrace_{\ell\ge 0}$.
We choose these $\alpha_\ell$, starting from $\alpha_0 = -\infty$,
by making $\alpha_\ell$ as large as possible while still obtaining the target efficiency inside each box:
\begin{equation}\label{eq:alpha_design}
	\begin{split}
		\alpha_\ell = \maximize_{\alpha > \alpha_{\ell-1}} \ \alpha \quad
		\text{such that} &\quad 
			\eff_{\text{min}} \le \minimize_{\omega\in [\alpha_{\ell-1}, \alpha]\times [0,2\pi/2^\ell]i} 
			\eff(\set W(\ve\mu), \set J(\omega)) \\
		\text{where} & \quad 
			\ve\mu = \begin{bmatrix} \alpha_{\ell-1} & \alpha_{\ell-1}+2\pi/2^{\ell}i &
		 \alpha & \alpha+2\pi/2^{\ell}i \end{bmatrix}.
	\end{split}
\end{equation}
This is a challenging nested optimization problem over a continuous set of $\omega$,
so we invoke \cref{thm:nearby} to construct an auxiliary grid of exponentials
where obtaining a slightly higher efficiency at these discrete points guarantees the target efficiency
is reached for any $\omega$ inside the box.

To construct this auxiliary grid, we specify a series of real parts $a_j \in \R$ and 
imaginary spacings $b_j\in \R_+$ that define the grid points $z_{j,k} := a_j + ik b_j$.
To specify $a_j$ and $b_j$ with a grid efficiency of $\eff_\text{grid}$,
starting from $a_0 = \alpha_\ell$ we solve the single variable finding root problem
that yields $a_j$ and $b_j$,
\begin{equation}
	\eff(\set J(a_j), \set J(a + (1+1i)c)) = \eff_{\text{grid}}, 
		\quad \Rightarrow \quad  a_{j+1} := a_j + c, \quad b_{j+1} := c,
\end{equation}
setting $b_0 = b_1$.
Then, invoking~\cref{thm:nearby}, we have the bound
\begin{equation}\label{eq:alpha_design_grid}
			\minimize_{\omega\in [\alpha_{\ell-1}, \alpha]\times [0,2\pi/2^\ell]i} 
			\eff(\set W(\ve\mu), \set J(\omega)) \le 
			\minimize_{\substack{\!\!\!\!\!\!\!\! 
				j,k \in \Z_+ \\ z_{j,k} \in [\alpha_{\ell-1}, \alpha]\times [0,2\pi/2^\ell]i} 
				\!\!\!\!\!\!}
			\eff_{\text{grid}} \cdot \eff(\set W(\ve\mu), \set J(z_{j,k})).\!\!\!
\end{equation}
Substituting this bound in~\cref{eq:alpha_design} replaces the inner optimization
with finding the minimum over a discrete set, simplifying the problem.
Further, since the accuracy of the efficiency computation is limited by the grid,
we restrict the maximization over $\alpha$ to the discrete set of grid points $a_j$.

\subsection{The ninety-five percent efficiency partition}
Here we provide the coordinates for a box partition with a target efficiency of $95\%$
constructed using $\eff_{\text{grid}} = 0.99999$ in \cref{tab:rings}
for multiple values of $n$.
In practice, we restrict our interpolation points to the closed left half plane
and hence set the first $\alpha_\ell$ greater than zero to zero.
Although this choice of target efficiency was arbitrary, 
it does make the right-most interpolation points 
correspond to the $n$th roots of unity that appear in the discrete Fourier transform (DFT).

\begin{table}
\caption{The real coordinates of the box partition $\alpha_\ell$
	as determined by solving \cref{eq:alpha_design} with $\eff_\text{grid} = 0.99999$.
	Due to the limitation of this discretization,
	these values are only accurate to approximately three digits.}
\label{tab:rings}

\begin{center}
	\footnotesize
	\pgfplotstabletypeset[
		clear infinite, 
		precision = 3,
		sci,
		sci zerofill,
		dec sep align,
		columns/ell/.style={int detect, column name = {$\ell$}},
		columns/inf/.style={column name = {$n=\infty$}},
		columns/n16/.style={column name = {$n=16$}},
		columns/n256/.style={column name = {$n=256$}},
		columns/n1024/.style={column name = {$n=1024$}},
		columns/n1048576/.style={column name = {$n=2^{20}$}},
		columns = {ell, n16, n256, n1024, n1048576, inf},
		every head row/.style = {
			before row = {\toprule},
			after row = {\midrule},
		},
		every last row/.style={after row=\bottomrule},
	]{tab_rings.dat}
\end{center}

\end{table}

Although \cref{tab:rings} only displays the coordinates for several values of $n$,
two patterns emerge that allow us to estimate the box partition for any $n$.
First note that the values for $\alpha_\ell$ when $n\ne \infty$ match those for $n=\infty$
for all but the last two, which are always larger.
Hence the values of $\alpha_\ell$ for $n=\infty$ are an lower bound on those for arbitrary $n$.
The other pattern is that after the first five, the $\alpha_\ell$ for $n=\infty$ 
shrink exponentially with  
\begin{equation}
	\alpha_\ell \approx -2.9720 \cdot 2^{-\ell}; \quad \ell \ge 5.
\end{equation}
These two patterns allow us to pick the box partition using $\alpha_\ell$ for $n=\infty$,
extending this sequence using the approximation above for larger values of $\ell$.

When $n$ is not a power of two, the box partition 
will no longer have the $n$th roots of unity available as interpolation points.
Due to their connection with the discrete Fourier transform, we prefer to keep $n$th roots of unity available
and thus modify the construction of the box partition.
For an $n$ that is not a power of two,
everything remains the same except when $\omega$ is in the rightmost stack,
$\Re \omega \in (\alpha_{\widehat\ell},0]$.
In this case we no longer use boxes, but pick the two closest interpolation points
with $\Re \mu = \alpha_{\widehat \ell}$ from the leftward stack 
and the two closest $n$th roots of unity where $\Re\mu = 0$.
Although we are no longer able to guarantee $95\%$ efficiency for exponentials in this range,
this heuristic still provides a high efficiency subspace in practice.

\section{A projected exponential fitting algorithm\label{sec:algorithm}}
Equipped with subspace $\set W(\ve\mu)$ for which the projected model $\ma W(\ve\mu)^*\ve f([\ve\omega, \ve a])$
and Jacobian $\ma W(\ve\mu)^*\ma J([\ve\omega, \ve a])$ 
can be inexpensively computed as described in \cref{sec:closed}
and combined with the heuristic from \cref{sec:subspace} to pick interpolation points $\ve\mu$
so that the subspace angles between $\set W(\ve\mu)$ and the range of the Jacobian $\set J(\ve\omega)$ are small,
we now construct an algorithm to solve exponential fitting problem using projected nonlinear least squares. 
Our basic approach is to solve a sequence of projected nonlinear least squares problems using Levenberg-Marquardt
and infrequently update the subspace during the course of optimization.
In this section we describe several important details for this algorithm.
First, in \cref{sec:algorithm:varpro} we show how \emph{variable projection}~\cite{GP03,GP73}
can be used to implicitly solve for the amplitudes $\ve a$ 
revealing optimization problem over frequencies $\ve\omega$ alone;
then in \cref{sec:algorithm:subspace} we discuss the details of how to update the subspace;
and finally in \cref{sec:algorithm:initial} we show how to obtain initial estimates of the frequencies $\ve\omega$
to enable a fair comparison with subspace based methods which do not require initial estimates.

\subsection{Variable projection\label{sec:algorithm:varpro}}
The key insight behind variable projection originated in a PhD thesis by Scolnik 
on the exponential fitting problem~\cite{Sco70}.
Recognizing the optimal linear parameters $\ve a$ are given by the pseudoinverse
for a fixed $\ve\omega$, $\ve a = \ma V(\ve\omega)^+\tve y$,
allows the residual to be stated as a function of $\ve\omega$ alone:
\begin{equation}
	\ve r([\ve\omega, \ve a]) = \ve f([\ve\omega, \ve a]) - \tve y = \ma V(\ve\omega) \ve a - \tve y \Rightarrow
	\left[\ma V(\ve\omega)\ma V(\ve\omega)^+ - \ma I \right]\tve y = \ma P_{\ma V(\ve\omega)}^\perp \tve y,
\end{equation}
where $\ma P_{\ma V(\ve\omega)}^\perp$ is the orthogonal projector onto the subspace perpendicular
to the range of $\ma V(\ve\omega)$.
This allows us to define an equivalent optimization problem over $\ve\omega$ alone
\begin{equation}
	\minimize_{\ve\omega \in \C^p} \| \hve r(\ve\omega) \|_2^2, \qquad 
		\hve r(\ve\omega) := \ma P_{\ma V(\ve\omega)}^\perp \tve y.
\end{equation}
Golub and Pereyra~\cite{GP73} provide the Jacobian for this variable projection residual $\hve r$, 
\begin{equation}
	[\hma J(\ve\omega)]_{\cdot,k} := -\left[
			\ma P_{\ma V(\ve\omega)}^\perp \frac{\partial \ma V(\ve\omega)}{\partial \omega_k} \ma V(\ve\omega)^-
			+\ma V(\ve\omega)^{-*}\left(\frac{\partial \ma V(\ve\omega)}{\partial \omega_k}\right)^*\ma P_{\ma V(\ve\omega)}^\perp
		\right] \tve y,
\end{equation}
and we can further exploit the structure of the exponential fitting problem 
to reveal a simple expression for this Jacobian.
Defining the short form QR-factorization of $\ma V(\ve\omega)$, 
$\ma V(\ve\omega) = \ma Q(\ve\omega) \ma T(\ve \omega)$,
this Jacobian becomes  
\begin{equation}
	\hma J(\ve\omega) = \left[\ma I - \ma Q(\ve\omega)\ma Q(\ve\omega)^*\right]\ma V'(\ve\omega)\diag(\ve a)
		 - \ma Q(\ve\omega)\ma T(\ve\omega)^{-*}\diag(\ma V'(\ve\omega)^*\hve r(\ve\omega)).
\end{equation}
With the linear parameters removed, the Levenberg-Marquardt method can then be applied to
the variable projection residual $\hve r(\ve\omega)$ and Jacobian $\hma J(\ve\omega)$.
The same expressions also apply to the projected problem
upon making the substitutions: $\ma V(\ve\omega)\to \ma W(\ve\mu)^*\ma V(\ve\omega)$,
$\ma V'(\ve\omega)\to \ma W(\ve\mu)^*\ma V'(\ve\omega)$, and $\tve y\to \ma W(\ve\mu)^*\tve y$.

\subsection{Updating subspaces\label{sec:algorithm:subspace}}
As the analysis in~\cref{sec:optimization} suggests that the subspace angles between
$\set W(\ve\mu)$ and $\set J(\ve\omega)$ need to remain small, 
we repeatedly update the interpolation points during the course of optimization.
Here we use the efficiency based heuristic described in \cref{sec:subspace}
to pick interpolation points, as a high efficiency ensures small subspace angles between 
$\set W(\ve\mu)$ and $\set J(\ve\omega)$.
However, rather than discarding interpolation points no longer required by this heuristic,
we preserve them, continually expanding the subspace. 
This is necessary to prevent the optimization algorithm from entering a cycle.

\subsection{Initialization\label{sec:algorithm:initial}}
A final issue concerns how we provide the initial values the optimization algorithm.
Subspace based methods do not require these initial values
and so to provide a fair comparison we use a simple initialization heuristic.
It is well known that peaks in the discrete Fourier transform (DFT) of a signal, 
$\ma F_n^*\tve y$ where $[\ma F_n]_{j,k} = n^{-1/2}e^{2\pi i j k/n}$,
correspond to the frequencies present~\cite{SM97}.
This forms the foundation of many initialization approaches.
For example, in magnetic resonance spectroscopy these peaks can be identified manually~\cite[\S3.3]{VBH96}
to initialize an optimization algorithm.
Here, we pick the initial estimate iteratively.
Starting with the first exponential, we set $\omega_1 = 2\pi i \widehat{k}/n$
where $\widehat k$ is the largest entry in $\ma F_n^*\tve y$.
Then after the optimization algorithm has terminated, we initialize $\omega_2$
based on the largest entry of the residual $\ma F_n^*\hve r(\ve\omega)$.
This process repeats until the desired number of exponentials have been recovered.
This approach is similar to that of Macleod~\cite{Mac98}, 
but we optimize all the frequencies $\ve\omega$ at each step.

\section{A numerical example\label{sec:example}}
To demonstrate the effectiveness of projected nonlinear least squares for exponential fitting,
we apply the algorithm described in \cref{sec:algorithm}
to a magnetic resonance spectroscopy test problem from~\cite[Table~1]{VBH97};
see, e.g., \cite[\S12.4]{HPS13} for a discussion of the underlying physics.
This example describes a continuous complex signal $y(t)$ consisting of eleven exponentials:
\begin{equation}\label{eq:nmr}
	\begin{split}
	&y(t) = \sum_{k=1}^{11} a_k e^{135i \pi / 180} e^{(2i\pi f_k - d_k) t} \quad \text{ where } \\
	&\begin{matrix} 
	\ve a &= [& 75 & 150 & 75 & 150 & 150 & 150 & 150 & 150 & 1400 & 60 & 500 &]\phantom{.} \\
	\ve f &= [& -86&-70&-54&152&168&292&308&360&440&490&530 &]\phantom{.}\\
	\ve d &= [& 50&50&50&50&50&50&50&25&285.7&25&200 &]
	\end{matrix}
	\end{split}
\end{equation}
from which we construct measurements $\tve y \in \C^n$ by 
sampling $y(t)$ uniformly in time and contaminating these
with independent and identically distributed additive Gaussian noise $\ve g$ with $\Cov \ve g = \ma I$
according to the formula 
\begin{equation}
	[\tve y]_k = y(\delta(n) k) + 15 [\ve g]_k, \qquad \delta(n): = \frac{256}{3n}\cdot 10^{-3}.
\end{equation}
This allows us to scale the original problem which took $n=256$ 
by increasing the sample rate $\delta$.
In this section we consider three algorithms applied to this exponential fitting problem:
conventional nonlinear least squares, our projected nonlinear least squares,
and HSVD~\cite{BBO87} as a representative of subspace based methods due to its simple implementation.
We present our results using two different implementations of HSVD:
an implementation using dense linear algebra and  fast implementation
using an $\order(n\log n)$ Hankel matrix-vector product and an iterative SVD algorithm following~\cite{LMVHH02}.
Our goal is to compare these algorithms on two metrics:
the wall clock time taken to solve the exponential fitting problem
and the precision of the resulting parameter estimates.
A Matlab implementation of our projected nonlinear least squares algorithm for exponential fitting,
the two HSVD implementations described,
and code to construct these examples are provided at {\tt \url{https://github.com/jeffrey-hokanson/ExpFit}}.
In these implementations we use tight convergence tolerances:
$10^{-16}$ for both residual norm and solution change in Matlab's
nonlinear least squares solver {\tt lsqnonlin} 
and $10^{-16}$ for the Ritz residual in Matlab's {\tt eigs} used in the fast HSVD implementation.

\begin{figure}
	\centering
	\begin{tikzpicture}[baseline=(current bounding box.north)]
		\pgfplotstableread{fig_timing_nmr_full.dat}\full
		\pgfplotstableread{fig_timing_nmr_hsvd_fast.dat}\hsvdfast
		\pgfplotstableread{fig_timing_nmr_projected.dat}\projected
		\pgfplotstableread{fig_timing_nmr_hsvd.dat}\hsvd
		\pgfplotstableread{fig_timing_nmr_mat-vec.dat}\matvec
		\begin{loglogaxis}[
			xlabel = {data dimension $n$},
			ylabel = wall time (seconds),
			width=0.94\textwidth,
			height=0.47\textwidth,
			title = Wall clock performance comparison,
			xmin = 1e2,
			xmax = 20e6,
			ymin = 1e-2,
			ymax = 1e3,
			ytickten = {-3,-2,-1,0,1,2,3}
			]
			\addplot[very thick, colorbrewerA1 ] table [x=n, y = median]{\projected}
				[anchor = south east] node [pos=0.95, rotate=23] {\bf projected NLS};
			\addplot[very thick, colorbrewerA2, dashdotted] table [x=n, y = median]{\full}
				[anchor = south east] node [pos =0.95, rotate=28] {full NLS};
			\addplot[very thick, colorbrewerA3, dotted] table [x=n, y = median]{\hsvd}
				[anchor = south east] node [pos =1.0, rotate=53] {dense HSVD};
			\addplot[very thick, colorbrewerA4, densely dotted] table [x=n, y = median]{\hsvdfast}
				[anchor = south east] node [pos =0.95, rotate=29] {fast HSVD};%
			\addplot[very thick, colorbrewerA5, dashdotdotted] table [x=n, y expr = \thisrow{mean}]{\matvec}
				[anchor = north west] node [pos =0.85, rotate=28, yshift=-2pt] {$\ma V(\ve\mu)^*\tve y$};%

			\addplot[thick, black, <->] coordinates { (1.6777e7,5.453e1) (1.6777e7,5.78e2)}
				[anchor = east] node [pos= 0.4] {$10\times$};
			
			\addplot[thick, black, <->] coordinates { (5.24e5,2.025 ) (5.24e5, 2.674e2)}
				[anchor = east] node [pos= 0.60] {$140\times$};
	
			\addplot[thick, black, ->] coordinates { (2e3, 10) (8e3,640)}
				[anchor = south west ] node [pos = 0.3, rotate = 58] {$\order(n^3)$};
			\addplot[thick, black, ->] coordinates { (1e6, 0.2) (1e7,2)}
				[anchor = north west ] node [pos = 0.35, rotate = 30, yshift =-2pt] {$\order(n)$};
		\end{loglogaxis}
	\end{tikzpicture}
	\caption{
		The median wall clock time from from ten runs 
		for four different exponential fitting algorithm implemented in Matlab 2016b,
		applied to data from~\cref{eq:nmr},
		and running on a 2013 Mac Pro with a 3.5 GHz 6-Core Intel Xeon E5 and 16 GB of RAM clocked at 1866 MHz.
		We also show the time taken to form $\ma V(\ve\mu)^*\tve y$ for $\ve\mu \in \C^{44}$
		which is a lower bound on the time taken by our projected nonlinear least squares algorithm;
		this is approximately the time required to check the first order necessary conditions.
	}
	\label{fig:timing_nmr}
\end{figure}

\subsection{Timing}
As these three algorithms use different paradigms for solving the exponential fitting problem,
we compare their performance using total wall clock time.
\Cref{fig:timing_nmr} shows the time taken by each algorithm when 
applied to the magnetic resonance spectroscopy test problem given in \cref{eq:nmr}.
Asymptotically, the time taken by the dense HSVD implementation scales like $\order(n^3)$ due to the dense SVD,
whereas the fast HSVD implementation scales like $\order(n\log n)$ due to the use of the fast Fourier transform (FFT)
to compute the Hankel matrix-vector product.
Similarly, the time taken both nonlinear least squares based approaches scales like $\order(n\log n)$
due to their use of the FFT in the initialization heuristic.
Although these three algorithms each have the same asymptotic rate, 
their constants are different.
In the limit of large data, the projected nonlinear least squares implementation is fastest,
but for small data, the repeated initialization of the optimization algorithm dominates the cost.
It is possible that a more careful implementation could avoid this cost
and bring the wall clock time for projected nonlinear least squares to closer,
or perhaps faster than, the fast HSVD implementation in the limit of small data.

\subsection{Precision\label{sec:example:precision}}
In addition to providing faster performance than fast HSVD for large data,
the projected nonlinear least squares approach also yields more precise parameter estimates.
Considering the same magnetic resonance spectroscopy example,
we seek to quantify the precision of our parameter estimates.
In the limit of small noise, the error in the parameter estimate $\tve\theta = [\tve\omega, \tve a]$
relative to the true parameters $\hve\theta = [\hve\omega, \hve a]$ is normally distributed with covariance:
\begin{equation}\label{eq:asymptotic_cov}
	\ma \Gamma := \ma J(\hve\theta)^*\E_{\ve g}[\ve g\ve g^*]\ma J(\hve\theta)
				= \ma J(\hve\theta)^*\ma J(\hve\theta) \cdot \E_\ve g[\|\ve g\|_2^2] \approx \Cov \tve \theta.
\end{equation}
If $\ma \Gamma$ is actually the covariance of $\tve\theta$, then $\ma \Gamma^{-1/2}(\tve\theta - \hve \theta)$
is normally distributed with zero mean an unit variance.
Hence, the norm of the mismatch $\|\ma \Gamma^{-1/2}(\tve \theta - \hve\theta)\|_2^2$
follows a $\chi^2$ distribution with $44$ degrees of freedom.
As seen in \cref{fig:nmr_pert},
the distribution of error of the projected nonlinear least squares problem approximately matches that of the full problem
and approaches the desired $\chi^2$ distribution as $n$ becomes large.
However, HSVD provides less precise parameter estimates, a result that follows from the analysis of Rao~\cite{Rao88}.

\begin{figure}
	\centering
	\begin{tikzpicture}
		\pgfplotstableread{fig_nmr_pert_chi2.dat}\kde
		\begin{groupplot}[
			group style = {
				group size=3 by 2,
				ylabels at= edge left,
				xlabels at = edge bottom,
				yticklabels at =edge left,
				horizontal sep=15pt,
				vertical sep = 15pt,
			},
			ylabel style={text width=3.5cm, text centered},
			width=0.37\textwidth,
			height=0.29\textwidth,
			xmin=0,
			xmax = 100,
			xtick = {0,20,..., 150},
			ymin=0,
			ymax=4.7e-2,
			yticklabels = {,,},
			scaled y ticks = false,
			xlabel = {$\| \ma \Gamma^{1/2}(\tve \theta - \hve\theta)\|_2^2$},
			ylabel = probabiliy density,
			]
			\nextgroupplot[title = {$n=256$}, xlabel = { }, 
				ylabel = {\hspace{18pt} Projected NLS \newline probability density}
			]

			\addplot[black, thick]  table [x=x,y=y]{\kde};	
			\addplot[black, thick, densely dotted]
				 table [x=x,y=y]{fig_nmr_pert_256_full.dat};
			\addplot[fill=colorbrewerA1, draw=none, opacity=0.5]
				 table [x=x,y=y]{fig_nmr_pert_256_projected.dat};
			
			\nextgroupplot[title = {$n=1024$}, xlabel = { }]
			\addplot[black, thick]  table [x=x,y=y]{\kde};	
			\addplot[black, thick, densely dotted]
				 table [x=x,y=y]{fig_nmr_pert_1024_full.dat};
			\addplot[fill=colorbrewerA1, draw=none, opacity=0.5]
				 table [x=x,y=y]{fig_nmr_pert_1024_projected.dat};
			
			\nextgroupplot[title = {$n=4096$}, xlabel = { }]
			\addplot[black, thick]  table [x=x,y=y]{\kde};	
			\addplot[black, thick, densely dotted]
				 table [x=x,y=y]{fig_nmr_pert_4096_full.dat};
			\addplot[fill=colorbrewerA1, draw=none, opacity=0.5]
				 table [x=x,y=y]{fig_nmr_pert_4096_projected.dat};

			\nextgroupplot[ylabel = {\hspace{26pt} Fast HSVD \newline probability density}]
			\addplot[black, thick]  table [x=x,y=y]{\kde};	
			\addplot[black, thick, densely dotted]
				 table [x=x,y=y]{fig_nmr_pert_256_full.dat};
			\addplot[fill=colorbrewerA4, draw=none, opacity=0.5] table
				 [x=x,y=y]{fig_nmr_pert_256_hsvd_fast.dat};
			
			\nextgroupplot[]
			\addplot[black, thick]  table [x=x,y=y]{\kde};	
			\addplot[black, thick, densely dotted]
				 table [x=x,y=y]{fig_nmr_pert_1024_full.dat};
			\addplot[fill=colorbrewerA4, draw=none, opacity=0.5] table
				 [x=x,y=y]{fig_nmr_pert_1024_hsvd_fast.dat};
			
			\nextgroupplot[]
			\addplot[black, thick]  table [x=x,y=y]{\kde};	
			\addplot[black, thick, densely dotted]
				 table [x=x,y=y]{fig_nmr_pert_4096_full.dat};
			\addplot[fill=colorbrewerA4, draw=none, opacity=0.5] table
				 [x=x,y=y]{fig_nmr_pert_4096_hsvd_fast.dat};
		\end{groupplot}
	\end{tikzpicture}
	\caption{
		The density of standardized error in the parameter estimate $\tve\theta$, 
		$\ma \Gamma^{1/2}(\tve \theta - \hve\theta)$ as described in \cref{sec:example:precision}.
		Thick curves show the expected distribution of the 2-norm of the standardized error
		and the filled regions show the empirically determined density from $4000$ realizations 
		of each method for each $n$.
		The dotted lines shows the density of the standardized error
		in the full nonlinear least squares parameter estimate.
		}
	\label{fig:nmr_pert}
\end{figure}
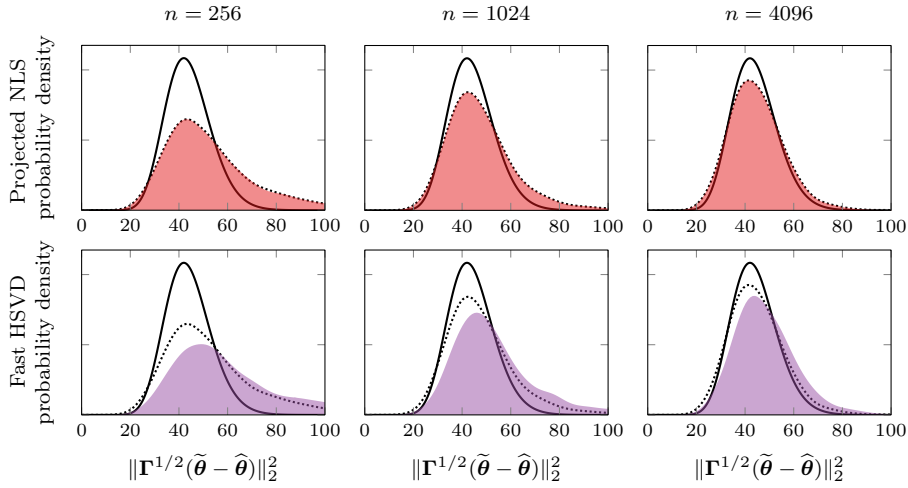

\section{Discussion}
In this paper we have shown that by solving a sequence of projected nonlinear least squares problems
we can substantially improve the run time performance with a negligible loss of accuracy
for solving exponential fitting problem 
when compared to both conventional nonlinear least squares and 
HSVD, a typical subspace based approach.
For the exponential fitting problem, 
there are still several open questions.
Is there a better choice for selecting interpolation points than our box partition?
Are there better subspaces, such as one that includes not only $\ma V(\ve\mu)$, but $\ma V'(\ve\mu)$ as well?
More generally: can we provide conditions that guarantee the convergence
of the series of projected problems?
Can we bound the error incurred by the projection in a deterministic sense?
Finally, we ask, what were the key features that allowed the projected approach to 
work for the exponential fitting problem and could this approach be applied to other problems?
One key feature was that projected model $\ma W^*_\ell \ve f(\ve\theta)$ and 
Jacobian $\ma W_\ell^*\ma J(\ve\theta)$ could be computed inexpensively.
The other key feature was that we were able to generate subspaces $\set W_\ell$
such that the subspace angles between $\set W_\ell$ and the range of the Jacobian $\set J(\ve\theta)$
remained small.
These two requirements limit the applicability of these results to specific pairs of models $\ve f(\ve\theta)$ and
subspaces $\set W_\ell$, but for those problems that satisfy these requirements
the projected nonlinear least squares approach presents a way to improve performance.

\section*{Acknowledgements}
I would like to thank Mark Embree, Caleb Magruder, and Paul Constantine for their help in preparing this manuscript;
Christopher Beattie, the Einstein Foundation of Berlin, Volker Mehrmann, and TU Berlin 
for hosting me during the writing of this manuscript;
and my PhD committee: Steve Cox, Thanos Antoulas, and Matthias Heinkenschloss.

\appendix
\section{Projected least squares error bounds\label{sec:bounds}}
Here we provide two lemmas used in \cref{sec:optimization}
related to the accuracy of a projected least squares problem.
 
\begin{lemma}\label{lem:project_ls}
	Let $\ma A\in \C^{n\times q}$ have full column rank and let $\ve b\in \C^n$
	with respective range $\set A$ and span $\set B$.
	Let $\set W$  be an $m$-dimensional subspace of $\C^n$ where $m\ge q$ 
	and let $\ma P_\set W$ be the orthogonal projector onto $\set W$ where $\ma P_\set W\ma A$ has full column rank.
	If $\ve x$ is the minimizer of $\| \ma A\ve x - \ve b\|_2$
	and $\ve y$ is the minimizer of $\| \ma P_\set W (\ma A\ve y - \ve b)\|_2$
	then
	\begin{equation}
		\| \ve x \!-\! \ve y\|_2 \le \!
		\| \ma A^+\|_2 \| \ve b\|_2 \!\left[
			\sin \phi_q(\set W, \set A)	\sin \phi_1(\set W, \set B) 
			\!+\! \tan^2 \phi_q(\set W, \set A) \cos \phi_1(\set W, \set B)
		\right]\!.\!\!\!
	\end{equation}
\end{lemma}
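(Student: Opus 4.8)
The plan is to write the two minimizers in closed form, split their difference into two pieces by inserting an intermediate term, and bound each piece in terms of the principal angles using the singular value characterization in \cref{eq:subspace_angles}.

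First I would record that $\ve x = \ma A^+\ve b$ (since $\ma A$ has full column rank) and, writing $\ma W\in\C^{n\times m}$ for an orthonormal basis of $\set W$ so that $\ma P_\set W = \ma W\ma W^*$, that $\ve y = (\ma P_\set W\ma A)^+\ma P_\set W\ve b = (\ma W^*\ma A)^+\ma W^*\ve b$; the second equality follows from the identity $(\ma W\ma C)^+ = \ma C^+\ma W^*$, which holds for any $\ma C$ because $\ma W$ has orthonormal columns (verify the four Moore--Penrose conditions). Inserting $\ma A^+\ma P_\set W\ve b$ as an intermediate term,
\begin{equation*}
	\ve x - \ve y = \ma A^+\ma P_\set W^\perp\ve b \;+\; \bigl[\ma A^+\ma W - (\ma W^*\ma A)^+\bigr]\ma W^*\ve b,
	\qquad \ma P_\set W^\perp := \ma I - \ma P_\set W,
\end{equation*}
and I would bound the two summands separately and combine via the triangle inequality. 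Throughout, let $\ma Q$ denote an orthonormal basis of $\set A$, and recall $\cos\phi_k(\set W,\set A) = \sigma_k(\ma W^*\ma Q)$ for $k=1,\dots,q$, with $\phi_q(\set W,\set A)<\pi/2$ precisely because $\ma P_\set W\ma A$ has full column rank; also $\cos\phi_1(\set W,\set B) = \|\ma P_\set W\ve b\|_2/\|\ve b\|_2$ and $\sin\phi_1(\set W,\set B) = \|\ma P_\set W^\perp\ve b\|_2/\|\ve b\|_2$ (the case $\ve b=\ve 0$ being trivial).

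For the first summand I would use $\ma A^+ = \ma A^+\ma P_\set A$ (from $\ma A\ma A^+=\ma P_\set A$ and $\ma A^+\ma A\ma A^+=\ma A^+$) to write $\ma A^+\ma P_\set W^\perp\ve b = \ma A^+\ma P_\set A\ma P_\set W^\perp\ve b$, so that $\|\ma A^+\ma P_\set W^\perp\ve b\|_2 \le \|\ma A^+\|_2\,\|\ma P_\set A\ma P_\set W^\perp\|_2\,\|\ma P_\set W^\perp\ve b\|_2$. Using idempotency of $\ma P_\set W^\perp$ together with $\|\ma Q\ma M\|_2 = \|\ma M\|_2$, the middle factor satisfies $\|\ma P_\set A\ma P_\set W^\perp\|_2^2 = \lambda_{\max}\bigl(\ma I_q - \ma Q^*\ma W\ma W^*\ma Q\bigr) = 1 - \sigma_q(\ma W^*\ma Q)^2 = \sin^2\phi_q(\set W,\set A)$, which with the expression for $\|\ma P_\set W^\perp\ve b\|_2$ gives the first term of the claimed bound. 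For the second summand I would take the short-form QR factorization $\ma A = \ma Q\ma T$ (so $\|\ma A^+\|_2 = \|\ma T^{-1}\|_2$) and set $\ma G := \ma W^*\ma Q$, which has full column rank with singular values $\cos\phi_k(\set W,\set A)$. Then $\ma A^+\ma W = \ma T^{-1}\ma G^*$ and $(\ma W^*\ma A)^+ = (\ma G\ma T)^+ = \ma T^{-1}\ma G^+$, so the summand equals $\ma T^{-1}(\ma G^* - \ma G^+)\ma W^*\ve b$. Writing $\ma G^+ = (\ma G^*\ma G)^{-1}\ma G^*$ and $\ma M := \ma G^*\ma G$ (Hermitian positive definite with eigenvalues $\cos^2\phi_k(\set W,\set A)$) yields $\ma G^* - \ma G^+ = \ma M^{-1}(\ma M - \ma I)\ma G^*$; since $\ma M^{-1}(\ma M - \ma I)$ has eigenvalues $-\tan^2\phi_k(\set W,\set A)$ and $\|\ma G^*\|_2 = \cos\phi_1(\set W,\set A)\le 1$, we obtain $\|\ma G^* - \ma G^+\|_2 \le \tan^2\phi_q(\set W,\set A)$. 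Combined with $\|\ma W^*\ve b\|_2 = \|\ma P_\set W\ve b\|_2 = \cos\phi_1(\set W,\set B)\|\ve b\|_2$, this produces the second term, and adding the two estimates finishes the proof.

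The pseudoinverse manipulations and the QR reduction are routine; the step that warrants the most care is translating operator norms into the largest principal angle $\phi_q$ --- both in $\|\ma P_\set A\ma P_\set W^\perp\|_2 = \sin\phi_q(\set W,\set A)$ and in the bound on $\|\ma G^* - \ma G^+\|_2$ --- where one must keep track that it is $\phi_q = \phi_{\min\{m,q\}}$ (the largest of the $q$ angles, since $m\ge q$) that appears, and that the full column rank of $\ma P_\set W\ma A$ is exactly what keeps $\tan^2\phi_q(\set W,\set A)$ finite.
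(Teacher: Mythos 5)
Your proof is correct and follows essentially the same route as the paper's: the same closed-form expressions for $\ve x$ and $\ve y$, the same splitting via $\ma I = \ma P_\set W + \ma P_\set W^\perp$, and the same identification of the two pieces with $\sin\phi_q\sin\phi_1$ and $\tan^2\phi_q\cos\phi_1$ (the paper uses the SVD of $\ma A$ and a Loewner-ordering argument for $\|(\ma U^*\ma W\ma W^*\ma U)^{-1}-\ma I\|_2$ where you use QR and the eigenvalues of $\ma M^{-1}(\ma M-\ma I)$, but these are the same computation). No gaps.
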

\begin{proof}
	Using the pseudoinverse, we write $\ve x$ and $\ve y$ as
	\begin{align}
		\ve x &= (\ma A^*\ma A)^{-1}\ma A^*\ve b, \\
		\ve y &= (\ma A^*\ma P_\set W\ma A)^{-1} \ma A^*\ma P_\set W\ve b.
	\end{align}
	Inserting the decomposition of the identity $\ma I = \ma P_\set W + \ma P_\set W^\perp$ before $\ve b$ in $\ve x$,
	\begin{equation}
		\ve x = (\ma A^*\ma A)^{-1}\ma A^*\ma P_\set W\ve b + (\ma A^*\ma A)^{-1}\ma A^*\ma P_\set W^\perp\ve b,
	\end{equation}
	we then note the difference between $\ve x$ and $\ve y$ is
	\begin{equation}
		\ve x - \ve y = (\ma A^*\ma A)^{-1}\ma A^*\ma P_\set W^\perp \ve b + 
				\left[(\ma A^*\ma A)^{-1} - (\ma A^*\ma P_\set W\ma A)^{-1}\right]\ma A^*\ma P_\set W\ve b.
	\end{equation}
	Replacing $\ma A$ with its short form SVD, $\ma A = \ma U\ma \Sigma \ma V^*$,
	and $\ma P_\set W$ with $\ma W\ma W^*$ where $\ma W$ is an orthonormal basis for $\set W$, 
	we have
	\begin{align*}
		\ve x - \ve y 
		&= \ma V\ma \Sigma^{-1}\ma U^*\ma P_\set W^\perp \ve b
			- \ma V\ma \Sigma^{-1}\left[
				(\ma U^*\ma W\ma W^*\ma U)^{-1}
				-\ma I
			  \right]\ma \Sigma^{-1}\ma U^*\ma W \ma W^* \ve b, \\
		\|\ve x \!-\! \ve y\|_2 &
		\le \|\ma \Sigma^{-1}\|_2 \left( 
			\| \ma P_\set W^\perp \ma U\|_2 \| \ma P_\set W^\perp \ve b\|_2 
			\!+\! \| (\ma U^*\ma W\ma W^*\ma U)^{-1} \! - \ma I\|_2 \| \ma U^*\ma W\|_2 \| \ma W^*\ve b\|_2
			\right)\!.\!\!\!\!
	\end{align*}
	Then invoking the subspace angle identities:
	$\| \ma P_\set W^\perp\ve b\|_2 = \sin \phi_1(\set W, \set B) \|\ve b\|_2$,
	$\| \ma W^*\ve b\|_2 \!=\! \cos\phi_1(\set W, \set B)\|\ve b\|_2$,
	$\| \ma U^*\ma W\|_2 = \cos \phi_1(\set W, \set A)$,
	and $\|\ma P_\set W^\perp \ma U\|_2 = \sin \phi_q(\set A, \set W)$,
	\begin{multline}
		\| \ve x - \ve y\|_2 \le \|\ma \Sigma^{-1}\|_2 \| \ve b \|_2 \big(  
			\sin \phi_q(\set W, \set A) \sin \phi_1(\set W, \set B) \\
			+ \| (\ma U^*\ma W\ma W^*\ma U)^{-1} - \ma I\|_2 \cos \phi_1(\set W, \set A)\cos \phi_1(\set W, \set B)
			\big).
	\end{multline}
	To bound $\| (\ma U^*\ma W\ma W^*\ma U)^{-1} - \ma I\|_2$,
	we note that as $\ma U^*\ma W\ma W^*\ma U$ is positive semidefinite, 
	there exists an $\alpha \ge 0$ such that $\ma U^*\ma W\ma W^*\ma U \succeq \alpha^2\ma I$.
	This implies
	\begin{align}
		\lambda_k(\ma U^*\ma W\ma W^*\ma U) - \alpha^2  \ge 0,
		\quad \Rightarrow \quad
		\sigma_k(\ma W^*\ma U)^2 - \alpha^2  \ge 0, \quad \forall k \in 1, \ldots, q
	\end{align}
	where $\lambda_k(\ma X)$ is the $k$ eigenvalue in descending order of $\ma X$.
	The largest $\alpha$ satisfying this inequality is 
	$\alpha = \sigma_q(\ma W^*\ma U) = \cos \phi_q(\set W, \set A)$.
	Invoking~\cite[Cor.~7.7.4]{HJ85}, 
	$\ma U^*\ma W\ma W^*\ma U \succeq \alpha^2 \ma I$
	implies  $(\ma U^*\ma W\ma W^*\ma U)^{-1} \preceq \alpha^{-2}\ma I$
	and hence
	\begin{align}
		(\ma U^*\ma W\ma W^*\ma U)^{-1} - \ma I &\preceq \alpha^{-2}\ma I - \ma I = (\alpha^{-2} - 1) \ma I.
	\end{align}
	Upon taking the norm, we have 
	\begin{align}
		\| (\ma U^*\ma W\ma W^* \ma U)^{-1} - \ma I \|_2 &\le (\alpha^{-2} - 1).
	\end{align}
	Thus $\alpha^{-2} - 1 = \sec^2\phi_q(\set W, \set A)$
	and invoking trigonometric identities, $\sec^2 \phi_q(\set W, \set A) - 1 = \tan^2 \phi_q(\set W, \set A)$;
	hence
	\begin{multline}
		\| \ve x - \ve y\|_2 \le \| \ma \Sigma^{-1}\|_2 \|\ve b\|_2 \big(
			\sin \phi_q(\set W, \set A) \sin \phi_1(\set W, \set B) \\
			+ \tan^2 \phi_q(\set W, \set A) \cos \phi_1(\set W, \set A) \cos \phi_1(\set W,  \set B)
		\big). 
	\end{multline}
	By applying the upper bound $\cos \phi_1(\set W, \set A) \le 1$, 
	and noting $\|\ma \Sigma^{-1}\|_2 = \|\ma A^{+}\|_2$ we obtain the desired bound.
\end{proof}

\begin{lemma}\label{lem:pls_normal}
	In the same setting as \cref{lem:project_ls},
	\begin{align}
		\| \ma A^*\ma A \ve y - \ma A^*\ve b\|_2 &\le 
		 \frac{\cos \phi_1(\set A, \set W) \sin \phi_q(\set A, \set W)}{\cos^2\phi_q(\set A, \set W)} 
			\| \ma A\|_2\| \ma P_\set A^\perp \ve b\|_2.
	\end{align}
\end{lemma}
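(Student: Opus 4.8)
The plan is to mirror the proof of \cref{lem:project_ls}, but to track the full‑problem normal‑equation residual evaluated at the projected minimizer rather than the gap $\ve x-\ve y$. First I would write the minimizers in closed form: $\ve x = \ma A^+\ve b$, so $\ma A^*\ma A\ve x = \ma A^*\ve b$, and (using that $\ma P_\set W\ma A$ has full column rank) $\ve y = (\ma A^*\ma P_\set W\ma A)^{-1}\ma A^*\ma P_\set W\ve b$. Hence $\ma A^*\ma A\ve y - \ma A^*\ve b = \ma A^*\ma A(\ve y-\ve x) = \ma E\ve b$ with $\ma E := \ma A^*\ma A(\ma A^*\ma P_\set W\ma A)^{-1}\ma A^*\ma P_\set W - \ma A^*$. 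A one‑line computation gives $\ma E\ma A = \ma A^*\ma A - \ma A^*\ma A = \ma 0$, so $\ma E\ma P_\set A = \ma E\ma A\ma A^+ = \ma 0$ and therefore $\ma E = \ma E\ma P_\set A^\perp$. (This is the zero‑residual observation: if $\ve b\in\set A$ then $\ve y=\ve x$, so the residual can only depend on $\ma P_\set A^\perp\ve b$.) It follows that $\|\ma A^*\ma A\ve y - \ma A^*\ve b\|_2 = \|\ma E\ma P_\set A^\perp\ve b\|_2 \le \|\ma E\ma P_\set A^\perp\|_2\,\|\ma P_\set A^\perp\ve b\|_2$, and the problem reduces to bounding $\|\ma E\ma P_\set A^\perp\|_2$.

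Next I would substitute the short‑form SVD $\ma A = \ma U\ma\Sigma\ma V^*$ and $\ma P_\set W = \ma W\ma W^*$, writing $\ma M := \ma U^*\ma W$. The term $\ma A^*\ma P_\set A^\perp$ vanishes, and $\ma E\ma P_\set A^\perp$ collapses to $\ma V\ma\Sigma(\ma M\ma M^*)^{-1}\ma M\ma W^*\ma P_\set A^\perp$. Since $\sigma_q(\ma M) = \cos\phi_q(\set A,\set W)$ by \cref{eq:subspace_angles} and $\|\ma\Sigma\|_2 = \|\ma A\|_2$, submultiplicativity gives $\|\ma E\ma P_\set A^\perp\|_2 \le \|\ma A\|_2\,\bigl(1/\cos^2\phi_q(\set A,\set W)\bigr)\,\|\ma M\ma W^*\ma P_\set A^\perp\|_2$, so it only remains to prove $\|\ma M\ma W^*\ma P_\set A^\perp\|_2 \le \cos\phi_1(\set A,\set W)\sin\phi_q(\set A,\set W)$.

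This last estimate is where the work is. Using $\ma M\ma W^* = \ma U^*\ma P_\set W$ shows $\|\ma M\ma W^*\ma P_\set A^\perp\|_2 = \|\ma P_\set A\ma P_\set W\ma P_\set A^\perp\|_2$, and the main obstacle is that the naive split $\|\ma M\|_2\|\ma W^*\ma P_\set A^\perp\|_2$ delivers only $\cos\phi_1(\set A,\set W)\cdot 1$ (the second factor is $1$ whenever $m>q$), while the split $\|\ma P_\set A\ma P_\set W^\perp\|_2\|\ma P_\set W^\perp\ma P_\set A^\perp\|_2$ delivers only $\sin\phi_q(\set A,\set W)\cdot 1$; neither captures both factors. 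To recover both simultaneously I would introduce the SVD $\ma M = \ma Y\ma C\ma X^*$ with $\ma C = \diag(\cos\phi_1,\ldots,\cos\phi_q)$, $\ma Y$ unitary, and $\ma X^*\ma X = \ma I$, so that $\ma M\ma W^*\ma P_\set A^\perp = \ma Y\ma C(\ma W\ma X)^*\ma P_\set A^\perp$. The columns of $\ma W\ma X$ are orthonormal (the principal vectors of $\set W$ relative to $\set A$), and the direct computation $(\ma W\ma X)^*\ma P_\set A^\perp(\ma W\ma X) = \ma I - (\ma U^*\ma W\ma X)^*(\ma U^*\ma W\ma X) = \ma I - \ma C^2 = \diag(\sin^2\phi_k)$ shows the rows of $(\ma W\ma X)^*\ma P_\set A^\perp$ are mutually orthogonal with norms $\sin\phi_k$. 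Hence $\ma Y\ma C(\ma W\ma X)^*\ma P_\set A^\perp$ has singular values $\{\cos\phi_k\sin\phi_k\}_{k=1}^q$, giving $\|\ma M\ma W^*\ma P_\set A^\perp\|_2 = \max_{1\le k\le q}\cos\phi_k(\set A,\set W)\sin\phi_k(\set A,\set W) \le \cos\phi_1(\set A,\set W)\sin\phi_q(\set A,\set W)$ by monotonicity of the $\phi_k$. Chaining the three displays yields the claim. (The same argument actually gives the sharper constant $\tan\phi_q(\set A,\set W)$ if one does not split off $(\ma M\ma M^*)^{-1}$; I would state the weaker form since it suffices downstream and parallels the $\tan^2\phi_q$ bound in \cref{lem:project_ls}.)
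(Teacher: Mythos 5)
Your proof is correct, and although it follows the paper's skeleton — show the normal-equation residual depends only on $\ma P_{\set A}^\perp \ve b$ via the cancellation $\ma E \ma A = \ma 0$, pass to the SVD of $\ma A$, and bound the resulting oblique-projector factor — it diverges at the decisive last estimate in a way worth recording. The paper bounds $\|\ma U_1^*\ma W\ma W^*\ma U_2\|_2$ by the submultiplicative split $\|\ma U_1^*\ma W\|_2\,\|\ma W^*\ma U_2\|_2$ and identifies the second factor with $\sin\phi_q(\set A,\set W)$; that identification is exact only when $m=q$, since for $m>q$ the subspaces $\set W$ and $\set A^\perp$ intersect nontrivially and $\|\ma W^*\ma U_2\|_2=1$ — precisely the obstruction you point out. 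Your replacement, diagonalizing $\ma P_{\set A}\ma P_{\set W}\ma P_{\set A}^\perp$ through the principal vectors $\ma W\ma X$ and computing its singular values exactly as $\cos\phi_k(\set A,\set W)\sin\phi_k(\set A,\set W)$, recovers the factor $\cos\phi_1\sin\phi_q$ by monotonicity of the $\phi_k$ without any restriction on $m$, so your argument establishes the stated bound in the regime $m>q$ that the applications in \cref{sec:optimization} actually use; as you note, it even yields the sharper constant $\tan\phi_q(\set A,\set W)$ if the factor $(\ma M\ma M^*)^{-1}$ is absorbed into the exact computation rather than split off. The remaining steps — the closed forms for $\ve x$ and $\ve y$, the reduction of $\ma E\ma P_{\set A}^\perp$ to $\ma V\ma \Sigma(\ma M\ma M^*)^{-1}\ma M\ma W^*\ma P_{\set A}^\perp$, the identity $\|(\ma M\ma M^*)^{-1}\|_2=\cos^{-2}\phi_q(\set A,\set W)$, and the verification $(\ma W\ma X)^*\ma P_{\set A}^\perp(\ma W\ma X)=\ma I-\ma C^2$ — all check out.
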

\begin{proof}
	Using the pseudoinverse, 
	\begin{align}
		\ma A^*\ma A \ve y - \ma A^*\ve b =
			\ma A^*\ma A(\ma A^*\ma P_\set W\ma A)^{-1} \ma A^*\ma P_\set W \ve b - \ma A^*\ve b. 
	\end{align}
	Then, inserting the decomposition of the identity $\ma I = \ma P_\set A + \ma P_\set A^\perp$
	between $\ma P_\set W$ and $\ve b$, 
	\begin{align}
	\ma A^*\ma A\ve y - \ve b =& \,
		 \ma A^*\ma A(\ma A^*\ma P_\set W\ma A)^{-1} \ma A^*\ma P_\set W(\ma P_\set A + \ma P_\set A^\perp) \ve b
				 - \ma A^*\ve b.
	\end{align}
	After expanding the first term on the right, the $\ma P_\set A$ component is $\ma A^*\ve b$, 
	\begin{align}
		 \ma A^{\!*}\ma A(\ma A^{\!*}\ma P_\set W\ma A)^{-1}\! \ma A^{\!*}\ma P_\set W\ma P_\set A\ve b
		\!=\! \ma A^{\!*}\ma A(\ma A^{\!*}\ma P_\set W\ma A)^{-1}\! \ma A^{\!*}\ma P_\set W \ma A (\ma A^{\!*}\ma A)^{-1}\! \ma A^{\!*} \ve b
		\!=\! \ma A^{\!*}\ve b,
	\end{align}
	and hence cancels $\ma A^*\ve b$ leaving one term:
	\begin{align}
		\ma A^*\ma A\ve y - \ma A^*\ve b = 
		 \ma A^*\ma A(\ma A^*\ma P_\set W\ma A)^{-1} \ma A^*\ma P_\set W \ma P_\set A^\perp \ve b.
	\end{align}
	Next, we define the oblique projector above 
	$\ma X := \ma A(\ma A^*\ma P_\set W\ma A)^{-1}\ma A^*\ma P_\set W$
	in terms of the SVD of $\ma A$.
	If $\ma A$ has a full and reduced SVD
	\begin{equation}
		\ma A = \ma U \ma \Sigma \ma V^* = \begin{bmatrix} \ma U_1 & \ma U_2 \end{bmatrix}
				\begin{bmatrix} \ma \Sigma_1 \\ \ma 0\end{bmatrix}
				\ma V^*
			= \ma U_1 \ma \Sigma_1 \ma V^*,
	\end{equation}
	then this oblique projector is
	\begin{align*}
		\ma X &= \ma U\ma \Sigma \ma V^*(\ma V^*\ma \Sigma^* \ma U\ma W\ma W^*\ma U\ma \Sigma \ma V^*)^{-1} \ma V\ma \Sigma^*\ma U^*\ma W\ma W^*\\
			&= \ma U_1\ma \Sigma_1 \ma V^*\ma V(\ma \Sigma_1 \ma U_1^*\ma W\ma W^*\ma U_1\ma \Sigma_1 )^{-1}\ma V^* \ma V\ma \Sigma_1^*\ma U_1^*\ma W\ma W^*\\
			&= \ma U_1\ma \Sigma_1 \ma \Sigma_1^{-1}(\ma U_1^*\ma W\ma W^*\ma U_1)^{-1}\ma \Sigma_1^{-1}\ma \Sigma_1^*\ma U_1^*\ma W\ma W^*\\
			&= \ma U_1(\ma U_1^*\ma W\ma W^*\ma U_1)^{-1}\ma U_1^*\ma W\ma W^*.
	\end{align*}
	Inserting this result into the expression for $\ma A^*\ma A\ve y - \ma A^*\ve b$, we obtain the bound	
	\begin{align*}
		\|\ma A^*\ma A \ve y - \ma A^*\ve b \|_2 &= \|\ma A^* \ma U_1(\ma U_1^*\ma W\ma W^*\ma U_1)^{-1}\ma U_1^*\ma W\ma W^*\ma U_2\ma U_2^*\ve b\|_2 \\
		&\le \|\ma A\|_2 \|(\ma U_1^*\ma W\ma W^*\ma U_1)^{-1}\|_2 \|\ma U_1^*\ma W\|_2 \|\ma W^*\ma U_2\|_2 \| \ma U_2^*\ve b\|_2 \\
		&= \sigma_q(\ma U_1^*\ma W)^{-2} \sigma_1(\ma U_1^*\ma W) \sigma_1(\ma W^*\ma U_2) \| \ma A\|_2\| \ma P_\set A^\perp \ve b\|_2\\ 
		&= \frac{\cos \phi_1(\set A, \set W) \sin \phi_q(\set A, \set W)}{\cos^2\phi_q(\set A, \set W)} \| \ma A\|_2\| \ma P_\set A^\perp \ve b\|_2. 
	\end{align*}
\end{proof}

\section{Generalized geometric sum formula\label{sec:geometric}}
A critical component for our algorithm is the ability to compute in closed form
\emph{generalized geometric sum},
\begin{equation}\label{eq:ggsum}
	\sum_{k=n_1}^{n_2-1} k^p e^{\delta k} \quad 
\end{equation}
where $\delta \in \C$, and  $p$, $n_1$, $n_2$ are non-negative integers.
The standard geometric sum formula provides a closed form expression when $p=0$
and when $e^\delta=1$, this sum is can be written in terms of Bernoulli polynomials~\cite[eq.~(24.4.9)]{DLMF}.
The following lemma establishes the remaining case when $e^\delta \ne 1$ and $p > 0$.
\begin{lemma}
	Let $\delta\in \C$ with $e^\delta\ne 1$, $p,n_1,n_2\in \Z_+$, where $0\le n_1\le n_2$,
	then 
	\begin{equation}\label{eq:generalized_geometric}
		\sum_{k=n_1}^{n_2 - 1} k^p e^{\delta k} = 
			\sum_{\ell=0}^{p} \frac{ \chi_{n_1} (p, \ell) e^{\delta (n_1+\ell)} - \chi_{n_2} (p,\ell) e^{\delta (n_2 +\ell} }
				{ (1-e^{\delta})^{\ell+1}}
	\end{equation}
	where $\chi_n(p,\ell)$ is given by the recurrence
	\begin{equation}\label{eq:chi}
		\chi_n(p+1,\ell) = (n+\ell)\chi_n(p,\ell) + k \, \chi_n(p,\ell-1); \qquad
		\chi_n(0,\ell) = \delta_{\ell,0}, 
		 \quad p,\ell \ge 0.
	\end{equation}
\end{lemma}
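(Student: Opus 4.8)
The plan is to prove \cref{eq:generalized_geometric} by induction on $p$, using the fact that each factor of $k$ can be manufactured by differentiating $e^{\delta k}$ with respect to $\delta$. For the base case $p=0$, the claimed identity reduces to $\sum_{k=n_1}^{n_2-1} e^{\delta k} = (e^{\delta n_1} - e^{\delta n_2})/(1-e^\delta)$, since $\chi_n(0,\ell) = \delta_{\ell,0}$; this is the classical geometric sum formula, valid precisely because $e^\delta\ne 1$. Along the way I would also record the elementary observation, proved by the same induction, that $\chi_n(p,\ell) = 0$ whenever $\ell < 0$ or $\ell > p$, so that the index conventions needed below are consistent with \cref{eq:chi}.

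For the inductive step, suppose \cref{eq:generalized_geometric} holds for some $p\ge 0$. The left-hand side is an analytic function of $\delta$ on the region where $e^\delta\ne 1$, and $\partial_\delta e^{\delta k} = k\,e^{\delta k}$, so differentiating the finite sum term by term gives
\[
\sum_{k=n_1}^{n_2-1} k^{p+1} e^{\delta k} = \frac{d}{d\delta}\sum_{k=n_1}^{n_2-1} k^{p} e^{\delta k} = \frac{d}{d\delta}\sum_{\ell=0}^{p} \frac{\chi_{n_1}(p,\ell)e^{\delta(n_1+\ell)} - \chi_{n_2}(p,\ell)e^{\delta(n_2+\ell)}}{(1-e^\delta)^{\ell+1}}.
\]
Hence it suffices to establish, for each fixed non-negative integer $n$, the term-level identity
\[
\frac{d}{d\delta}\sum_{\ell=0}^{p}\frac{\chi_n(p,\ell)\,e^{\delta(n+\ell)}}{(1-e^\delta)^{\ell+1}} = \sum_{\ell=0}^{p+1}\frac{\chi_n(p+1,\ell)\,e^{\delta(n+\ell)}}{(1-e^\delta)^{\ell+1}};
\]
applying this with $n=n_1$ and with $n=n_2$ and subtracting then yields \cref{eq:generalized_geometric} with $p+1$ in place of $p$.

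The heart of the argument is this term-level identity. Differentiating a single summand by the quotient rule and simplifying the power of $1-e^\delta$ gives
\[
\frac{d}{d\delta}\,\frac{e^{\delta(n+\ell)}}{(1-e^\delta)^{\ell+1}} = \frac{(n+\ell)\,e^{\delta(n+\ell)}}{(1-e^\delta)^{\ell+1}} + \frac{(\ell+1)\,e^{\delta(n+\ell+1)}}{(1-e^\delta)^{\ell+2}}.
\]
Multiplying by $\chi_n(p,\ell)$ and summing over $\ell$, the first family of terms contributes $(n+\ell)\chi_n(p,\ell)$ to the coefficient of $e^{\delta(n+\ell)}/(1-e^\delta)^{\ell+1}$, while the second family, after the shift $\ell\mapsto\ell-1$, contributes $\ell\,\chi_n(p,\ell-1)$ to that same coefficient; their sum is exactly $\chi_n(p+1,\ell)$ by the recurrence \cref{eq:chi}, and the index range widens to $0\le\ell\le p+1$. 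The two extreme indices are handled using the conventions above: at $\ell=0$ only the first family appears, matching $\chi_n(p+1,0)=n\,\chi_n(p,0)$, and at $\ell=p+1$ only the shifted second family appears, matching $\chi_n(p+1,p+1)=(p+1)\,\chi_n(p,p)$.

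I expect the only genuinely delicate point to be the index bookkeeping in this last computation: keeping the two families of terms produced by the quotient rule aligned so that they recombine into the single recurrence \cref{eq:chi}, and checking the boundary indices $\ell=0$ and $\ell=p+1$ by hand. The remaining steps — the base case, the term-by-term differentiation, and the quotient-rule simplification — are routine.
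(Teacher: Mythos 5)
Your proposal is correct and is essentially the same argument as the paper's: the paper writes the sum as the $p$th $\delta$-derivative of the closed-form geometric sum and then proves the derivative formula for $e^{n\delta}/(1-e^\delta)$ by induction, which is exactly your term-level identity, with the identical quotient-rule computation and index shift recombining into the recurrence. (Incidentally, your reading of the recurrence with $\ell\,\chi_n(p,\ell-1)$ in place of the stated $k\,\chi_n(p,\ell-1)$ is the one the paper's own proof actually uses.)
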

\begin{proof}
	Multiplying each term of the geometric sum by $k^p$ corresponds to a $p$th derivative with respect to $\delta$
	of each entry.
	Since this is a finite sum, we pull the derivative outside the sum yielding
	\begin{equation}\label{eq:dersum}
		\sum_{k=n_1}^{n_2-1} k^p e^{\delta k} 
			= \sum_{k=n_1}^{n_2-1} \frac{\partial^p}{\partial \delta^p}e^{\delta k}
			= \frac{\partial^p}{\partial \delta^p} \sum_{k=n_1}^{n_2-1} e^{\delta k}
			= \frac{\partial^p}{\partial \delta^p} \frac{e^{\delta n_1} - e^{\delta n_2}}{1-e^\delta}.
	\end{equation}
	To obtain an explicit formula for the derivative on the right,
	we show by induction	
	\begin{equation}\label{eq:recurrence}
		\frac{\partial^p}{\partial \delta^p} \frac{e^{n\delta}}{1-e^\delta} 
			= \sum_{\ell=0}^p \chi_n(p,\ell) \frac{e^{(n+\ell)\delta}}{(1-e^\delta)^{\ell+1}}.
	\end{equation}
	The base case $p=0$ holds as $\chi_n(0,0) = 1$.
	The inductive step follows by taking the derivative of each side
	\begin{align*}
	\frac{\partial}{\partial \delta} \sum_{\ell=0}^p \chi_n(p,\ell) \frac{e^{(n+\ell)\delta}}{(1-e^\delta)^{\ell+1}}%
	& = \sum_{\ell=0}^{p+1} 
		\left[ \chi_n(p,\ell) (n+\ell) + \chi_n(p,\ell-1) \ell \right] 
		\frac{e^{(n+\ell)\delta}}{(1-e^\delta)^{\ell+1}} \\ 
	& = \sum_{\ell=0}^{p+1} \chi_n(p+1,\ell) \frac{e^{(n+\ell)\delta}}{(1-e^\delta)^{\ell+1}}.
	\end{align*}
	Subtracting~\cref{eq:recurrence} evaluated at $n = n_2$ from \cref{eq:recurrence} evaluated at $n=n_1$
	yields \cref{eq:generalized_geometric}.
\end{proof}

With this lemma, we now state the generalized geometric sum formula.

\begin{theorem}[Generalized Geometric Sum Formula]\label{thm:polyexp_sum}
	Given $\delta\in \C$, $p\in \Z_+$, and $n_1,n_2\in \Z$ with $0\le n_1< n_2$ integers, then 
	\begin{equation}\label{eq:polyexp_sum}
	\sum_{k=n_1}^{n_2-1} k^p e^{\delta k} = \begin{cases}
	\displaystyle \sum_{\ell=0}^p \frac{\chi_{n_1}(p,\ell)e^{(n_1+\ell)\delta} 
		- \chi_{n_2}(p,\ell)e^{ (n_2+\ell)\delta}}{(1-e^\delta)^{\ell+1}},
	& e^\delta \ne 1; \vspace{2pt}\\
	\displaystyle 
	\phantom{\sum_{k=0}^p} \frac{B_{p+1}(n_2) - B_{p+1}(n_1)}{p+1}, & e^\delta = 1;
	\end{cases}
	\end{equation}
	where $B_p$ is the $p$th Bernoulli polynomial.
\end{theorem}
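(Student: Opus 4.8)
The plan is to handle the two cases separately, since the substantive work has already been carried out in the preceding lemma. In the case $e^\delta \neq 1$ there is nothing new to prove: the claimed expression is precisely the conclusion of the lemma, with $\chi_n(p,\ell)$ defined by the recurrence \cref{eq:chi} (and understood to vanish for $\ell<0$ or $\ell>p$, so that the finite sums are well defined). So for this branch I would simply cite the lemma, after checking that the half-open index conventions $n_1\le k\le n_2-1$ agree in both statements.

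For the case $e^\delta = 1$ the sum collapses to the power sum $\sum_{k=n_1}^{n_2-1} k^p$, and the task is to establish the Faulhaber-type identity
\[
	\sum_{k=n_1}^{n_2-1} k^p = \frac{B_{p+1}(n_2) - B_{p+1}(n_1)}{p+1}.
\]
The cleanest route is to invoke the defining difference equation of the Bernoulli polynomials, $B_{p+1}(x+1) - B_{p+1}(x) = (p+1)x^p$ (see, e.g., \cite[eq.~(24.4.9)]{DLMF}), and sum it telescopically over $x = n_1, n_1+1, \dots, n_2-1$: the left-hand side collapses to $B_{p+1}(n_2) - B_{p+1}(n_1)$, while the right-hand side equals $(p+1)\sum_{k=n_1}^{n_2-1} k^p$. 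Dividing by $p+1$ gives the stated formula.

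Since both pieces are short, I do not anticipate a genuine obstacle; the only points deserving care are the index-range bookkeeping and the convention $\chi_n(p,\ell)=0$ outside $0\le\ell\le p$. The theorem is essentially a repackaging of the lemma together with the classical Bernoulli power-sum identity, stated as a single unified formula for later use in \cref{sec:closed}. (One could alternatively try to recover the $e^\delta=1$ branch as a limit $\delta\to 0$ of the first branch, but that requires controlling the cancellation of the poles in $(1-e^\delta)^{-(\ell+1)}$; the telescoping argument above is simpler and I would use it instead.)
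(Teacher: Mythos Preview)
Your proposal is correct and matches the paper's approach: the paper does not give a separate proof of the theorem at all, but simply states it after the lemma, having already noted before the lemma that the $e^\delta=1$ case ``can be written in terms of Bernoulli polynomials~\cite[eq.~(24.4.9)]{DLMF}.'' Your plan---cite the lemma for $e^\delta\ne1$ and invoke the Bernoulli/Faulhaber identity (via the telescoping difference $B_{p+1}(x+1)-B_{p+1}(x)=(p+1)x^p$) for $e^\delta=1$---is exactly this, just with the Bernoulli step spelled out rather than left to the DLMF citation.
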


\bibliographystyle{siamplain}
\bibliography{abbrevjournals,master}

\end{document}